\theoremstyle{plain}
\newtheorem{theorem}[subsection]{Theorem}
\newtheorem{proposition}[subsection]{Proposition}
\newtheorem{corollary}[subsection]{Corollary}
\theoremstyle{definition}
\newtheorem{definition}[subsection]{Definition}
\theoremstyle{remark}
\newtheorem{example}[subsection]{Example}
\newtheorem{examples}[subsection]{Examples}
\newtheorem{remark}[subsection]{Remark}
\newcommand{\defn}[1]{\textbf{#1}}
\def\mathrmdef#1{\expandafter\def\csname#1\endcsname{{\rm#1}}}
\newcommand{\C}{\ensuremath{\mathbb{C}}}
\newcommand{\V}{\ensuremath{\mathbb{V}}}
\newcommand{\Grp}{\ensuremath{\mathsf{Grp}}}
\newcommand{\Ord}{\ensuremath{\mathsf{Ord}}}
\newcommand{\Ab}{\ensuremath{\mathsf{Ab}}}
\newcommand{\Mon}{\ensuremath{\mathsf{Mon}}}
\newcommand{\Rng}{\ensuremath{\mathsf{Rng}}}
\newcommand{\SRng}{\ensuremath{\mathsf{SRng}}}
\newcommand{\Set}{\ensuremath{\mathsf{Set}}}
\newcommand{\Pt}{\ensuremath{\mathsf{Pt}}}
\newcommand{\Eq}{\ensuremath{\mathsf{Eq}}}
\newcommand{\W}{\ensuremath{\mathsf{W}}}
\newcommand{\M}{\ensuremath{\mathsf{M}}}
\newcommand{\cod}{\ensuremath{\mathsf{cod}}}
\newcommand{\OrdGrp}{\ensuremath{\mathsf{OrdGrp}}}
\newcommand{\VCat}{\ensuremath{V\mbox{-}\mathsf{Cat}}}
\newcommand{\threesum}[3]{\protect{\left(\begin{smallmatrix} {#1}\\ {#2}\\ {#3}\end{smallmatrix}\right)}}
\newcommand{\twosum}[2]{\protect{\left(\begin{smallmatrix} {#1}\\ {#2}\\ \end{smallmatrix}\right)}}
\def\mathrmdef#1{\expandafter\def\csname#1\endcsname{{\rm#1}}}
\newcommand{\la}{\langle}
\newcommand{\ra}{\rangle}
\newcommand{\ultra}[1]{\dot{#1}}
\def\splitsplitpullback{%
 \ar@{-}[]+R+<6pt,-.5ex>;[]+RD+<6pt,-6pt>%
 \ar@{-}[]+D+<.5ex,-6pt>;[]+RD+<6pt,-6pt>}
\def\mathrmdef#1{\expandafter\def\csname#1\endcsname{{\rm#1}}}
\newcommand{\App}{\ensuremath{\mathsf{App}}}
\newcommand{\Cat}{\ensuremath{\mathsf{Cat}}}
\newcommand{\Top}{\ensuremath{\mathsf{Top}}}
\newcommand{\Rel}{\ensuremath{\mathsf{Rel}}}
\def\relto{{\longrightarrow\hspace*{-2.8ex}{\mapstochar}\hspace*{2.6ex}}}
\def\xxx{\mathfrak{X}}
\def\xx{\mathfrak{x}}
\def\yy{\mathfrak{y}}
\def\zz{\mathfrak{z}}
\def\ww{\mathfrak{w}}
\begin{document}

\title{A note on Mal'tsev objects}

\author{Maria Manuel Clementino}
\address{CMUC, Department of Mathematics, University of
Coimbra, 3000-143 Coimbra, Portugal}\thanks{}
\email{mmc@mat.uc.pt}

\author{Diana Rodelo}
\address{Department de Mathematics, University of the Algarve, 8005-139 Faro, Portugal and CMUC, Department of Mathematics, University of Coimbra, 3000-143 Coimbra, Portugal}
\thanks{The authors acknowledge partial financial assistance by {\it Centro de Matem\'atica da Universidade de Coimbra} (CMUC), funded by the Portuguese Government through FCT/MCTES, DOI 10.54499/UIDB/00324/2020.}
\email{drodelo@ualg.pt}

\keywords{Unital categories, Mal'tsev categories, Mal'tsev objects, $V$-categories}

\subjclass{18E13, 
 20J15, 
 08C05, 
54D10, 
54E35}

\begin{abstract} The aim of this work is to compare the distinct notions of \emph{Mal'tsev object} in the sense of Weighill and in the sense of Montoli-Rodelo-Van der Linden.
\end{abstract}

\maketitle

\section*{Introduction}
A variety of universal algebras $\V$ is called a \emph{Mal'tsev variety}~\cite{Maltsev-Sbornik} when its theory admits a ternary operation $p$ satisfying the equations
\[
	p(x,y,y)=x\;\;\mathrm{and}\;\; p(x,x,y)=y.
\]
Such varieties were characterised in~\cite{Maltsev-Sbornik} by the fact that any pair of congruences $R$ and $S$ on a same algebra $X$ is \emph{2-permutable}, i.e. $RS=SR$. It was shown in~\cite{Lambek} that such varieties are also characterised by the fact that any homomorphic relation $D$ from an algebra $X$ to an algebra $Z$ is \emph{difunctional}: $(x_1 D z_2 \wedge x_2 D z_2 \wedge x_2 D z_1) \Rightarrow x_1 D z_1$.

The notion of Mal'tsev variety was generalised to a categorical context in~\cite{CLP} (see also~\cite{CKP, CPP, BB}). This was achieved by translating the above properties on homomorphic relations for a variety into similar properties on (internal) relations in a category. A regular category~\cite{Barr} $\C$ is called a \emph{Mal'tsev category} when any pair of equivalence relations $R$ and $S$ in $\C$ on a same object $X$ is such that $RS=SR$. Mal'tsev categories can also be characterised by the difunctionality of relations:

\begin{theorem}~\cite{CLP, CPP, CKP}\label{Maltsev char difunctional} A finitely complete category $\C$ is a Mal'tsev category if and only if any relation $D\to X\times Z$ in $\C$ is difunctional.
\end{theorem}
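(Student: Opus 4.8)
The plan is to prove the equivalence by reducing it to the standard internal form of the Mal'tsev condition for a finitely complete category $\C$, namely that \emph{every reflexive relation on every object of $\C$ is an equivalence relation} (for $\C$ regular this is equivalent to the $2$-permutability condition recalled above). All the reasoning below proceeds with generalised elements; this is legitimate because every relation, limit and factorisation involved is built from finite limits, so an implication of the form ``$x\,D\,z\wedge\cdots\To\cdots$'' always unfolds into an actual factorisation of a morphism between finite limits of $\C$.

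\emph{If every relation in $\C$ is difunctional, then $\C$ is Mal'tsev.} Let $R\to X\times X$ be a reflexive relation and write its difunctionality as $(x_1Rz_2\wedge x_2Rz_2\wedge x_2Rz_1)\To x_1Rz_1$. Two substitutions exploiting reflexivity suffice: with $(x_1,z_2,x_2,z_1):=(a,b,b,c)$ the hypotheses become $a\,R\,b$, $b\,R\,b$, $b\,R\,c$ and the conclusion is transitivity; with $(x_1,z_2,x_2,z_1):=(b,b,a,a)$ they become $b\,R\,b$, $a\,R\,b$, $a\,R\,a$ and the conclusion $b\,R\,a$ is symmetry. Hence every reflexive relation is an equivalence relation, so $\C$ is Mal'tsev. (When $\C$ is regular one even recovers the $2$-permutability of equivalence relations: for equivalence relations $R$, $S$ the composite $R\cdot S$ is reflexive, hence an equivalence relation, hence self-inverse, so that $(R\cdot S)^{\circ}=S\cdot R$.)

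\emph{If $\C$ is Mal'tsev, then every relation is difunctional.} Let $D\to X\times Z$ be a relation with projections $d_1\colon D\to X$, $d_2\colon D\to Z$, and let $\iota=\la d_1,d_2\ra\colon D\to X\times Z$ be its inclusion. The one creative step is to produce the right reflexive relation: let $R\to D\times D$ be the pullback of $\la d_1\pi_1,d_2\pi_2\ra\colon D\times D\to X\times Z$ along $\iota$, so that $(P,P')\in R$ iff $d_1(P)\,D\,d_2(P')$; concretely, for $P=(x,z)$ and $P'=(x',z')$ in $D$ one has $(P,P')\in R$ iff $x\,D\,z'$. This $R$ is a relation \emph{on $D$}, and it is reflexive, because the diagonal $D\to D\times D$ factors through $R$: $(x,z)\in D$ says precisely $x\,D\,z$. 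By hypothesis $R$ is therefore an equivalence relation, in particular transitive. It remains to feed in the difunctionality datum for $D$, i.e.\ the finite limit $\Pi$ of $D\xrightarrow{d_2}Z\xleftarrow{d_2}D\xrightarrow{d_1}X\xleftarrow{d_1}D$, whose generalised elements are triples of points $(x_1,z_2)$, $(x_2,z_2)$, $(x_2,z_1)$ of $D$. Reading $R$ off these three points: $(x_1,z_2)$ is $R$-related to $(x_2,z_2)$ (which only uses $(x_1,z_2)\in D$), and $(x_2,z_2)$ is $R$-related to $(x_2,z_1)$ (which only uses $(x_2,z_1)\in D$); since these two $R$-steps share the middle point $(x_2,z_2)$, transitivity of $R$ forces $(x_1,z_2)$ to be $R$-related to $(x_2,z_1)$, which by the definition of $R$ says $x_1\,D\,z_1$ --- precisely the conclusion of difunctionality. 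At the level of morphisms: the two comparison maps $\Pi\to D\times D$ determined by the first/middle and the middle/last pairs of points factor through $R$, they agree on the middle leg and so pair up to a map $\Pi\to R\times_D R$, and postcomposing with the transitivity factorisation $R\times_D R\to R$ exhibits the comparison $\Pi\to X\times Z$ that sends the datum to $(x_1,z_1)$ as factoring through $\iota$.

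The genuine obstacle is guessing $R$: it must be a relation on $D$, built from the diagonal of $D$ rather than that of $X\times Z$ --- which is exactly what makes it reflexive while still encoding the predicate ``$x\,D\,z'$'' that links the two endpoints of a difunctionality datum. Once $R$ is in place the rest is a routine finite-limit diagram chase, the only thing to watch being that the two coordinate-identifications built into $\Pi$ --- the shared $z_2$ of its first two points and the shared $x_2$ of its last two points --- are exactly what make those comparison maps land in $R$.
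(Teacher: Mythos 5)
The paper only cites this theorem from \cite{CLP,CPP,CKP} and gives no proof of its own, so there is nothing to diverge from; your argument is correct and is essentially the classical one from those references. Both directions check out: the two substitutions $(a,b,b,c)$ and $(b,b,a,a)$ do extract transitivity and symmetry of a reflexive relation from difunctionality, and conversely the reflexive relation $R$ on $D$ with $(P,P')\in R$ iff $d_1(P)\,D\,d_2(P')$ is exactly the standard device whose transitivity, applied to the chain $(x_1,z_2)\,R\,(x_2,z_2)\,R\,(x_2,z_1)$, yields $x_1\,D\,z_1$.
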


The main examples of Mal'tsev varieties are $\Grp$ of groups, $\Ab$ of abelian groups, $R$-$\mathsf{Mod}$ of modules over a commutative ring $R$, $\Rng$ of rings, and $\mathsf{Heyt}$ of Heyting algebras. More generally, any variety whose theory contains a group operation is a Mal'tsev variety; an example of a non-Mal'tsev variety is the variety $\Mon$ of monoids. As examples of Mal'tsev categories (that are not varietal) we have the category $\Grp(\mathsf{Top})$ of topological groups, any abelian category or the dual of an elementary topos. Also, if $\C$ is a Mal'tsev category, then so are the (co)slice categories $\C/X$ and $X/\C$, for any object $X$ of $\C$ (see~\cite{BB}, for example).

There are also several other well-known characterisations of Mal'tsev varieties through nice properties on relations, such as the fact that every reflexive relation is necessarily a congruence. All of these characteristic properties have been generalised to the Mal'tsev categorical context (see~\cite{CLP, CPP, CKP, BB, BGJ}). This wide range of nice properties together with the long list of examples has contributed to the great amount of research developed on Mal'tsev varieties and categories over the past 70 years. They are also just the first instance of the family of \emph{$n$-permutable} varieties~\cite{HM}, for $n\ge 2$. An $n$-permutable variety is such that any pair of congruences $R$ and $S$ on a same algebra $X$ is $n$-permutable, i.e. $(R,S)_n=(S,R)_n$, where $(R,S)=RSR\cdots$ denotes the composite of $R$ and $S$, $n$ times; they have been generalised to $n$-permutable categories in~\cite{CKP}. Concerning $n$-permutability, the reader may be interested in~\cite{JRVdL}, to see how varietal proofs translate into categorical ones, and~\cite{M-FRVdL} for further properties on relations.

Weighill in~\cite{Weighill} used the characterisation of a Mal'tsev category obtained through the difunctionality of all relations to introduce a definition of Mal'tsev object. To fully understand this definition, we must develop further on relations in $\C$.

A (binary) \emph{relation} $R$ in a category $\C$ from an object $X$ to an object $Z$ is a span $X\stackrel{r_1}{\longleftarrow} R \stackrel{r_2}{\longrightarrow} Z$ such that $(r_1,r_2)$ is jointly monomorphic. We identify two relations $X\leftarrow R \rightarrow Z$ and $X\leftarrow R'\rightarrow Z$, when $R$ factors through $R'$ and vice-versa. When $\C$ admits binary products, a relation $R$ from $X$ to $Z$ is a subobject of $X\times Z$, denoted $\la r_1,r_2\ra\colon R\to X\times Z$. A relation $X\stackrel{r_1}{\longleftarrow} R \stackrel{r_2}{\longrightarrow} Z$ in $\C$ is difunctional when the relation
\begin{equation}\label{W-difunctional}
  \xymatrix@C=40pt{\C(Y,X) & \ar[l]_-{\C(Y,r_1)} \C(Y,R) \ar[r]^-{\C(Y,r_2)} & \C(Y,Z)}
\end{equation}
in $\Set$ is difunctional, for every object $Y$ of $\C$. The definition of a reflexive, symmetric, transitive and equivalence relation in $\C$ is obtained similarly.

\begin{definition}\label{W-Maltsev obj}\cite{Weighill}
An object $Y$ of $\C$ is called a \defn{W-Mal'tsev object}\footnote{We added the prefix ``W-'' to distinguish these from the Mal'tsev objects of Definition~\ref{Maltsev obj}} when for every relation $X\stackrel{r_1}{\longleftarrow} R \stackrel{r_2}{\longrightarrow} Z$ in $\C$, the $\Set$-relation \eqref{W-difunctional} is difunctional.
\end{definition}

It follows from Theorem~\ref{Maltsev char difunctional} that a finitely complete category $\C$ is a Mal'tsev category if and only all of its objects are W-Mal'tsev objects.

It is well-known that the category $(\Set)^\op$ is a Mal'tsev category~\cite{CKP}, hence every set is a W-Mal'tsev object in $(\Set)^\op$. Weighill's study of W-Mal'tsev objects led to the identification of interesting Mal'tsev subcategories of duals of categories of topological flavour. We will recall and generalise these results in Section 2.

Using a completely different approach, Bourn in~\cite{MCFPO} classified several categorical notions (including that of Mal'tsev category) through the fibration of points. A \emph{point} $(f\colon A\to B, s\colon B\to A)$ in a category $\C$ is a split epimorphism $f$ with a chosen splitting $s$. We may define the category of points in $\C$, denoted by $\Pt(\C) $: a morphism between points is a pair $(x,y) \colon(f,s)\to (f',s')$ of morphisms in $\C$ such that the following diagram commutes
\[
\xymatrix@!0@=4em{B \ar[r]^-{s} \ar[d]_y & A \ar[r]^-{f} \ar[d]^-{x} & B \ar[d]^y \\
B' \ar[r]_{s'} & A' \ar[r]_-{f'} & B'.}
\]
When $\C$ has pullbacks of split epimorphisms, the forgetful functor $\cod \colon {\Pt(\C) \to \C}$, which associates
with every split epimorphism its codomain, is a fibration called the \emph{fibration of points}~\cite{Bourn1991}.

From the several classifying properties of the fibration of points $\cod$ studied in~\cite{MCFPO}, we emphasize the following one.

\begin{theorem}~\cite{MCFPO}\label{Maltsev char unital} A finitely complete category $\C$ is a Mal'tsev category if and only if $\cod$ is unital.
\end{theorem}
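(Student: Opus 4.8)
The plan is to reduce both implications to the difunctionality criterion of Theorem~\ref{Maltsev char difunctional} by first rephrasing the unitality of the fibres of $\cod$ in the language of internal relations. For an object $B$, the fibre $\Pt_B(\C)$ is a pointed finitely complete category: its zero object is $(1_B,1_B)$, and the product of two points $(f_1\colon A_1\to B,s_1)$ and $(f_2\colon A_2\to B,s_2)$ is the pullback $A_1\times_B A_2\to B$, together with the canonical morphisms into it
\[
  \iota_1=\langle 1_{A_1},s_2f_1\rangle\colon A_1\to A_1\times_B A_2,
  \qquad
  \iota_2=\langle s_1f_2,1_{A_2}\rangle\colon A_2\to A_1\times_B A_2 .
\]
A monomorphism of $\Pt_B(\C)$ has monic underlying morphism in $\C$ (seen by turning the kernel pair of the underlying morphism into a point over $B$), so a subobject of $A_1\times_B A_2$ in $\Pt_B(\C)$ is in particular a relation from $A_1$ to $A_2$ in $\C$, and $\langle s_1,s_2\rangle\colon B\to A_1\times_B A_2$ automatically factors through it as soon as both $\iota_1$ and $\iota_2$ do (since $\iota_1s_1=\langle s_1,s_2\rangle$). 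Thus $\cod$ is unital if and only if, for every $B$ and every such pair of points, the only relation $S\colon A_1\to A_2$ in $\C$ that is a subobject of $A_1\times_B A_2$ and through which both $\iota_1$ and $\iota_2$ factor is $A_1\times_B A_2$ itself.

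For the ``only if'' direction, suppose $\C$ is a Mal'tsev category and let $S$ be such a relation. By Theorem~\ref{Maltsev char difunctional}, $S$ is difunctional. Writing $a\mathrel{S}b$ when the pair of generalised elements $a\colon C\to A_1$, $b\colon C\to A_2$ (with $f_1a=f_2b$) factors through $S$, the hypotheses on $\iota_1$ and $\iota_2$ read $a_1\mathrel{S}s_2f_1a_1$ for all $a_1\colon C\to A_1$ and $s_1f_2a_2\mathrel{S}a_2$ for all $a_2\colon C\to A_2$. For a generalised element $(a_1,a_2)$ of $A_1\times_B A_2$, applying the first instance to $a_1$ and to $s_1f_2a_2$ and the second to $a_2$ (and using $f_1s_1=1_B$ and $f_1a_1=f_2a_2$) yields $a_1\mathrel{S}s_2f_2a_2$, $s_1f_2a_2\mathrel{S}s_2f_2a_2$ and $s_1f_2a_2\mathrel{S}a_2$; difunctionality then gives $a_1\mathrel{S}a_2$. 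At the generic element this says $1_{A_1\times_B A_2}$ factors through $S$, so the monomorphism $S\rightarrowtail A_1\times_B A_2$ is also a split epimorphism, hence an isomorphism. Therefore every fibre of $\cod$ is unital.

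For the ``if'' direction, suppose $\cod$ is unital and let $\langle r_1,r_2\rangle\colon D\rightarrowtail X\times Z$ be any relation in $\C$; by Theorem~\ref{Maltsev char difunctional} it suffices to show $D$ difunctional. Take $B:=D$ and apply the reformulation to the two points over $D$ whose underlying split epimorphisms are a projection $\pi_1\colon D\times_X D\to D$ of the kernel pair of $r_1$ and a projection $\rho_1\colon D\times_Z D\to D$ of the kernel pair of $r_2$, each split by the corresponding diagonal. Their product in $\Pt_D(\C)$ is $Q:=(D\times_X D)\times_D(D\times_Z D)$, whose generalised elements are triples $(u,v,v')$ of generalised elements of $D$ satisfying $r_1u=r_1v$ and $r_2u=r_2v'$; writing $x=r_1u$, $z=r_2u$, $z'=r_2v$, $x'=r_1v'$, such a triple is precisely the datum $x\mathrel{D}z\wedge x\mathrel{D}z'\wedge x'\mathrel{D}z$. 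Let $S\rightarrowtail Q$ be the pullback of $\langle r_1,r_2\rangle$ along the morphism $Q\to X\times Z$, $(u,v,v')\mapsto(r_1v',r_2v)=(x',z')$, so that $S$ imposes the remaining condition $x'\mathrel{D}z'$. On the image of $\iota_1$ one has $v'=u$, whence $x'=x$ and $x'\mathrel{D}z'$ reduces to $x\mathrel{D}z'$, witnessed by $v$; on the image of $\iota_2$ one has $v=u$, whence $z'=z$ and $x'\mathrel{D}z'$ reduces to $x'\mathrel{D}z$, witnessed by $v'$; in particular both injections, and the zero section, factor through $S$. Unitality of $\Pt_D(\C)$ now forces $S=Q$, that is, $x\mathrel{D}z\wedge x\mathrel{D}z'\wedge x'\mathrel{D}z\Rightarrow x'\mathrel{D}z'$ for all generalised elements, which is the difunctionality of $D$. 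Hence $\C$ is a Mal'tsev category.

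The routine ingredients are the explicit description of the fibres $\Pt_B(\C)$ (products, the injections $\iota_1,\iota_2$, the detection of monomorphisms in $\C$) and the relational computation in the ``only if'' part. The real obstacle is the ``if'' part: one must guess the correct base, namely $D$ itself, together with the two ``diagonal'' points over it, and then recognise that the subobject of their product carved out by the one relational instance not already built into $Q$ is exactly the one receiving both injections — the bookkeeping that makes $x'\mathrel{D}z'$ collapse, on each of the two images, to an instance already present in $Q$.
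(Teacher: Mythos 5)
The paper does not actually prove this statement: it is quoted from Bourn's paper \cite{MCFPO} as a known result, so there is no internal proof to compare yours against. Judged on its own, your argument is correct. The preliminary reduction is sound: monomorphisms in $\Pt_B(\C)$ are exactly the morphisms of points with monic underlying map (your kernel-pair argument works), subobjects of the product $A_1\times_B A_2$ in the fibre are relations from $A_1$ to $A_2$ in $\C$, and the section factors automatically once $\iota_1$ does, so unitality of the fibres is indeed the statement that no proper such relation contains both injections. The ``only if'' computation is a clean application of difunctionality to the triple $a_1\mathrel{S}s_2f_2a_2$, $s_1f_2a_2\mathrel{S}s_2f_2a_2$, $s_1f_2a_2\mathrel{S}a_2$, and evaluating at the generic element correctly turns the mono $S\rightarrowtail A_1\times_B A_2$ into a split epimorphism. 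The ``if'' direction is also correct: the two kernel-pair points $\pi_1\colon \Eq(r_1)\to D$ and $\rho_1\colon \Eq(r_2)\to D$, split by the diagonals, have as product over $D$ exactly the object of difunctionality configurations $(x\mathrel{D}z,\ x\mathrel{D}z',\ x'\mathrel{D}z)$, the pullback $S$ of $\langle r_1,r_2\rangle$ along $(u,v,v')\mapsto(r_1v',r_2v)$ is a genuine subobject in $\Pt_D(\C)$ receiving both injections (your verification that $x'\mathrel{D}z'$ degenerates to $x\mathrel{D}z'$ and $x'\mathrel{D}z$ on the two images is the decisive point), and $S=Q$ read at the generic element is precisely difunctionality of $D$. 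This is essentially the route of Bourn's original argument (and of Borceux--Bourn), namely a reduction in both directions to the difunctionality criterion of Theorem~\ref{Maltsev char difunctional}; the only thing I would flag is that you should say explicitly that in a finitely complete category ``jointly strongly epimorphic'' coincides with ``not factoring jointly through a proper subobject'', which is the form of unitality you actually use.
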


Unitality of $\cod$ means that the category $\Pt_Y(\C)$ of points in $\C$ over $Y$ is a unital category, for every object $Y$ of $\C$. Recall from~\cite{MCFPO} that a pointed and finitely complete category is called a \emph{unital category} when, for all objects $A,C$ of $\C$, the pair of morphisms $(\la 1_A,0\ra \colon A\to A\times C, \;\la 0,1_C\ra \colon C\to A\times C)$ is jointly strongly epimorphic. The fact that $\Pt_Y(\C)$ is unital means: for every pullback of points over $Y$
\begin{equation}
\label{pb of split epis} \vcenter{\xymatrix@!0@=6em{ A\times_{Y}C
\splitsplitpullback \ar@<-.5ex>[d]_{\pi_A}
\ar@<-.5ex>[r]_(.7){\pi_C} & C \ar@<-.5ex>[d]_g
\ar@<-.5ex>[l]_-{\langle sg,1_C \rangle} \\
A \ar@<-.5ex>[u]_{\langle 1_A,tf \rangle} \ar@<-.5ex>[r]_f & Y
\ar@<-.5ex>[l]_s \ar@<-.5ex>[u]_t }}
\end{equation}
(which corresponds to a binary product in $\Pt_Y(\C)$), the pair of morphisms $(\langle 1_{A}, tf \rangle$, $\langle sg, 1_{C} \rangle )$ is jointly strongly epimorphic.

Since $\Pt_1(\C)\cong \C$, then every pointed Mal'tsev category $\C$ is necessarily unital. The converse is false. Indeed, $\Mon$ and $\SRng$ are examples of unital categories that are not Mal'tsev categories.

Inspired on the classification properties of the fibration of points $\cod$ studied in~\cite{MCFPO}, the authors of~\cite{MRVdL} explored several algebraic categorical notions, such as those of (strongly) unital~\cite{MCFPO}, subtractive~\cite{ZJanelidze-Subtractive}, Mal'tsev~\cite{CLP, CPP, CKP} and protomodular categories~\cite{Bourn1991}, at an object-wise level. This led to the corresponding notions of (strongly) unital, subtractive, Mal'tsev and protomodular objects. This approach allows one to distinguish ``good'' objects, i.e. with stronger algebraic properties, in a setting with weaker algebraic properties. The goal of this work was to obtain a categorical-algebraic characterisation of groups amongst monoids and of rings amongst semirings. It was shown in~\cite{MRVdL} that a monoid $Y$ is a group if and only if $Y$ is a \emph{Mal'tsev object} in $\Mon$ if and only if $Y$ is a \emph{protomodular object} in $\Mon$.

\begin{definition}\label{Maltsev obj} An object $Y$ of a finitely complete category $\C$ is called a \defn{Mal'tsev object} if the category $\Pt_Y(\C)$ of points over $Y$ is a unital category.
\end{definition}

It follows from Theorem~\ref{Maltsev char unital} that a finitely complete category $\C$ is a Mal'tsev category if and only all of its objects are Mal'tsev objects.

The notions of W-Mal'tsev object, studied in~\cite{Weighill}, and that of Mal'tsev object, in the sense of~\cite{MRVdL}, were obtained independently and developed with different goals in mind. Although their definitions are very different in nature, there is an obvious common property to both: a finitely complete category $\C$ is a Mal'tsev category if and only if all of its objects are W-Mal'tsev objects if and only if all of its objects are Mal'tsev objects. There are also several other properties shared by both notions, which we will recall in Section~\ref{W-Maltsev objs vs Maltsev objs}. Altogether, these observations led us to the natural question:
\begin{center}
	\textsf{To what extent are W-Mal'tsev and Mal'tsev objects comparable}?
\end{center}

\section{W-Mal'tsev object vs. Mal'tsev objects}\label{W-Maltsev objs vs Maltsev objs}
In this section we shall compare the notion of W-Mal'tsev object~\cite{Weighill} and that of Mal'tsev object~\cite{MRVdL} in a base category $\C$, which is finitely complete and may, eventually, admit some extra structure. We fix a finitely complete category $\C$ and denote by $\W(\C)$ (resp. $\M(\C)$), the full subcategory of $\C$ determined by the W-Mal'tsev (resp. Mal'tsev) objects of $\C$.

We begin by combining Theorems~\ref{Maltsev char difunctional} and~\ref{Maltsev char unital} into one:
\begin{theorem}\label{main Maltsev} Let $\C$ be a finitely complete category. The following statements are equivalent:
\begin{itemize}
	\item[(i)] $\C$ is a Mal'tsev category;
	\item[(ii)] diagram~\eqref{W-difunctional} is a difunctional relation in $\Set$, for any object $Y$ in $\C$ and any relation $X\stackrel{r_1}{\longleftarrow} R \stackrel{r_2}{\longrightarrow} Z$ in $\C$;
	\item[(iii)] for any pullback of points over an arbitrary object $Y$ in $\C$ as in~\eqref{pb of split epis}, the pair of morphisms $(\langle 1_{A}, tf \rangle$, $\langle sg, 1_{C} \rangle )$ is jointly strongly epimorphic.
\end{itemize}
\end{theorem}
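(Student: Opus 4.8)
The statement is a repackaging of Theorems~\ref{Maltsev char difunctional} and~\ref{Maltsev char unital}: the plan is simply to match condition (ii) with the hypothesis of the first and condition (iii) with the hypothesis of the second, the only work being to unfold the relevant definitions. So the proof will have two halves, $\mathrm{(i)}\Leftrightarrow\mathrm{(ii)}$ and $\mathrm{(i)}\Leftrightarrow\mathrm{(iii)}$, neither of which requires a new idea.

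For $\mathrm{(i)}\Leftrightarrow\mathrm{(ii)}$: by the definition of difunctional relation in $\C$ recalled just before Definition~\ref{W-Maltsev obj}, a span $X\stackrel{r_1}{\longleftarrow}R\stackrel{r_2}{\longrightarrow}Z$ in $\C$ is difunctional precisely when the $\Set$-relation~\eqref{W-difunctional} is difunctional for every object $Y$ of $\C$. Hence statement (ii) says, verbatim, that \emph{every} relation in $\C$ is difunctional. Since $\C$ is finitely complete it has binary products, so jointly monic spans $X\leftarrow R\rightarrow Z$ are exactly subobjects $\langle r_1,r_2\rangle\colon R\to X\times Z$; thus (ii) is exactly the condition appearing on the right-hand side of Theorem~\ref{Maltsev char difunctional}, and that theorem yields $\mathrm{(i)}\Leftrightarrow\mathrm{(ii)}$.

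For $\mathrm{(i)}\Leftrightarrow\mathrm{(iii)}$: by Theorem~\ref{Maltsev char unital}, $\C$ is a Mal'tsev category if and only if the fibration of points $\cod\colon\Pt(\C)\to\C$ is unital, i.e. if and only if $\Pt_Y(\C)$ is a unital category for every object $Y$ of $\C$. It then remains to unfold what unitality of $\Pt_Y(\C)$ asserts: that $\Pt_Y(\C)$ is pointed (with zero object the identity point $(1_Y,1_Y)$) and finitely complete (finite limits computed from those of $\C$); that the binary product in $\Pt_Y(\C)$ of two points $(f,s)$ and $(g,t)$ over $Y$ is the pullback of split epimorphisms displayed in~\eqref{pb of split epis}; and that, under this identification, the canonical monomorphisms $\langle 1_A,0\rangle$ and $\langle 0,1_C\rangle$ from the definition of a unital category acquire total-space components $\langle 1_A,tf\rangle$ and $\langle sg,1_C\rangle$ — indeed the zero morphism $(f,s)\to(g,t)$ in $\Pt_Y(\C)$ has total component $tf$, and the zero morphism $(g,t)\to(f,s)$ has total component $sg$. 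Granting this, unitality of $\Pt_Y(\C)$ for every $Y$ is exactly statement (iii), whence $\mathrm{(i)}\Leftrightarrow\mathrm{(iii)}$.

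The verifications in the last paragraph — that~\eqref{pb of split epis} realises the binary product in $\Pt_Y(\C)$ and that the two displayed morphisms are the coproduct-style injections entering the notion of unital category — constitute the only computational content, and they are routine diagram chases with points; moreover the excerpt already records the first of them parenthetically, and both are in~\cite{MRVdL}. Consequently there is no real obstacle here: the theorem is a bookkeeping statement whose proof is confined to aligning the definitions of difunctional relation and of unital category with the two cited characterisations.
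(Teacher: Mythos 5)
Your proposal is correct and follows exactly the route the paper intends: the paper offers no separate proof, introducing the theorem with ``We begin by combining Theorems~\ref{Maltsev char difunctional} and~\ref{Maltsev char unital} into one,'' and the definitional unfoldings you supply (that (ii) restates difunctionality of all relations, and that (iii) restates unitality of every $\Pt_Y(\C)$ via the identification of~\eqref{pb of split epis} as the binary product of points) are precisely the ones the paper records in the introduction.
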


An immediate consequence of Definitions~\ref{W-Maltsev obj} and~\ref{Maltsev obj} is the following one (see \textsf{(W1)} and \textsf{(M1)} below).
\begin{proposition} Let $\C$ be a finitely complete category. The following statements are equivalent:
\begin{itemize}
	\item[(i)] $\C$ is a Mal'tsev category;
	\item[(ii)] all objects of $\C$ are W-Mal'tsev objects;
	\item[(iii)] all objects of $\C$ are Mal'tsev objects.
\end{itemize}
\end{proposition}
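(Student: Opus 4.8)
The plan is to deduce the Proposition directly from Theorem~\ref{main Maltsev}, since its three conditions are, after unwinding the relevant definitions, verbatim reformulations of the three conditions listed there.

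First I would record the equivalence (i)$\Leftrightarrow$(ii). By Definition~\ref{W-Maltsev obj}, saying that every object $Y$ of $\C$ is a W-Mal'tsev object means precisely that for every object $Y$ and every relation $X\stackrel{r_1}{\longleftarrow} R \stackrel{r_2}{\longrightarrow} Z$ in $\C$ the induced $\Set$-relation \eqref{W-difunctional} is difunctional; this is exactly condition (ii) of Theorem~\ref{main Maltsev}, whose equivalence with (i) is established there. Next I would record (i)$\Leftrightarrow$(iii). By Definition~\ref{Maltsev obj}, an object $Y$ is a Mal'tsev object exactly when $\Pt_Y(\C)$ is a unital category; unwinding unitality of $\Pt_Y(\C)$ (as recalled right after Theorem~\ref{Maltsev char unital}) says that for every pullback of points over $Y$ of the form \eqref{pb of split epis} the pair $(\langle 1_{A},tf\rangle,\langle sg,1_{C}\rangle)$ is jointly strongly epimorphic. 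Quantifying over all objects $Y$, the assertion that every object is a Mal'tsev object is therefore exactly condition (iii) of Theorem~\ref{main Maltsev}, again equivalent to (i). Chaining the two equivalences yields (i)$\Leftrightarrow$(ii)$\Leftrightarrow$(iii).

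There is no genuine obstacle here: the mathematical content is entirely carried by Theorem~\ref{main Maltsev}, and the only thing to verify is bookkeeping of quantifiers, namely that ``all objects of $\C$ are W-Mal'tsev (resp. Mal'tsev) objects'' matches on the nose the universally quantified condition (ii) (resp. (iii)) of that theorem. The one mild subtlety worth making explicit is that Definitions~\ref{W-Maltsev obj} and~\ref{Maltsev obj} are single-object statements whereas conditions (ii) and (iii) of Theorem~\ref{main Maltsev} already incorporate the quantifier over $Y$; so the proof reduces to the observation that prefixing ``for all objects $Y$'' to a single-object definition reproduces precisely the quantified condition.
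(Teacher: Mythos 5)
Your proposal is correct and matches the paper's approach exactly: the paper presents this proposition as an immediate consequence of Definitions~\ref{W-Maltsev obj} and~\ref{Maltsev obj} together with Theorem~\ref{main Maltsev}, which is precisely the quantifier-bookkeeping argument you spell out. No gap.
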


The challenge of the comparison process is when the category $\C$ is not a Mal'tsev category, but admits (W-)Mal'tsev objects. Note that both definitions depend heavily on the surrounding category $\C$. To have a Mal'tsev object one must check a property for all pullbacks of points over that object; to have a W-Mal'tsev object one must check a property concerning all relations in $\C$. However, when $\C$ is a regular category~\cite{Barr} with binary coproducts, the result stated below in \textsf{(W5)} gives an independent (of all relations in $\C$) way to check that an object $Y$ is a W-Mal'tsev object. Consequently, one would expect that the notion of a Mal'tsev object is stronger than that of a W-Mal'tsev object. Indeed, that is the case as stated in Proposition~\ref{Maltsev obj => W-Maltsev obj}. Additionally, the greater demand on Mal'tsev objects may lead to ``trivial'' cases. This happens when $\C=\VCat$ (and $V$ is not a cartesian quantale), where the only Mal'tsev object is the empty set $\emptyset$, while a W-Mal'tsev object is precisely a symmetric $V_\wedge$-category (Theorem~\ref{thm for V-cats}). See also Example~\ref{Ex in Ordgrp} concerning (W-)Mal'tsev objects in $\OrdGrp$, the category of preordered groups. In other contexts, such as $\Mon$, the notions of W-Mal'tsev object and that of Mal'tsev object both coincide with a group (see Proposition~\ref{W-Maltsev objs in Mon are groups} and Theorem 6.14 in~\cite{MRVdL}).

In what follows it will be useful to consider properties on relations in a category using generalised elements (see~\cite{CKP}, for example). Let $R$ be a relation from $X$ to $Z$ given by the subobject $\la r_1,r_2\ra\colon R\to X\times Z$. Let $x\colon \colon A\to X$ and $z\colon A \to Z$ be morphisms, which can be considered as \emph{generalised elements} of $X$ and $Z$, respectively. We write $(x,z)\in_A R$ when the morphism $\la x,z\ra$ factors through $\la r_1,r_2\ra$
\[
	\xymatrix{A \ar[dr]_-{\la x,z \ra} \ar[rr] & & R \ar[dl]^-{\la r_1,r_2\ra} \\ & X\times Z.}
\]
Using this notation, an object $Y$ is a W-Mal'tsev object in $\C$ when: for any relation $\la r_1,r_2\ra\colon R\to X\times Z \in \C$ and morphisms $x_1,x_2\colon Y\to X$ and $z_1,z_2\colon Y \to Z$,
\begin{equation}\label{difunctionality}
	\left(\, (x_1,z_2)\in_Y R, (x_2,z_2)\in_Y R, (x_2,z_1)\in_Y R \,\right) \Rightarrow (x_1,z_1)\in_Y R.
\end{equation}

We extract from~\cite{Weighill} the main results which are used in the comparison process. Note that, in~\cite{Weighill} a Mal'tsev category need not be finitely complete by definition. Consequently, we adapted the result stated in \textsf{(W4)} to our finitely complete request.
\begin{itemize}
	\item[\textsf{(W1)}] $\W(\C)=\C$ if and only if $\C$ is a Mal'tsev category.
	\item[\textsf{(W2)}] $\W(\C)$ is closed under colimits and (regular) quotients in $\C$ (Proposition 2.1 in~\cite{Weighill}).
	\item[\textsf{(W3)}] When $\C$ is a well-powered regular category with coproducts, then $\W(\C)$ is a coreflective subcategory of $\C$; thus, $\W(\C)$ is (finitely) complete whenever $\C$ is (Corollary 2.2 in~\cite{Weighill}).
	\item[\textsf{(W4)}] Let $\C$ be a well-powered regular category with binary coproducts. Suppose that any relation in $\W(\C)$ is also a relation in $\C$. Then $\W(\C)$ is the largest full subcategory of $\C$ which is a Mal'tsev category and which is closed under binary coproducts and quotients in $\C$ (Corollary 2.5 in~\cite{Weighill}).
	\item[\textsf{(W5)}] Let $\C$ be a regular category with binary coproducts. An object $Y$ is a W-Mal'tsev object in $\C$ if and only if,
	given the (regular epimorphism, monomorphism) factorisation in $\C$
\begin{equation}\label{iotas}
\vcenter{\xymatrix{ 3Y \ar[dr]^e \ar[dd]_-{\left(\begin{smallmatrix} \iota_1 & \iota_2 \\ \iota_2 & \iota_2 \\ \iota_2 & \iota_1 \end{smallmatrix}\right)} \\ & R \ar[dl]^-{\la r_1,r_2\ra} \\ 2Y\times 2Y}}
\end{equation}
(which guarantees that $(\iota_1,\iota_2)\in_Y R$, $(\iota_2,\iota_2)\in_Y R$, $(\iota_2,\iota_1)\in_Y R$), we have $(\iota_1,\iota_1)\in_Y R$. As usual, we write $Y+Y=2Y$, $Y+Y+Y=3Y$, and $\iota_j\colon Y\to kY$ for the $j$-th coproduct coprojection (Proposition 2.3 in~\cite{Weighill}).
\end{itemize}

The main results we wish to emphasize from~\cite{MRVdL} are the following ones.
\begin{itemize}
	\item[\textsf{(M1)}] $\M(\C)=\C$ if and only if $\C$ is a Mal'tsev category (Proposition 6.10 in~\cite{MRVdL}).
	\item[\textsf{(M2)}] If $\M(\C)$ is closed under finite limits in $\C$, then $\M(\C)$ is a Mal'tsev category (Corollary 6.11 in~\cite{MRVdL}).
		\item[\textsf{(M3)}] When $\C$ is a pointed category, then $\C$ is a unital category if and only if $0$ is a Mal'tsev object (Proposition 6.4 in~\cite{MRVdL}).
	\item[\textsf{(M4)}] When $\C$ is a regular category, then $\M(\C)$ is closed under quotients in $\C$ (Proposition 6.12 in~\cite{MRVdL}).
	\item[\textsf{(M5)}] When $\C$ is a regular category, an object $Y$ in $\C$ is a Mal'tsev object if and only if every double split epimorphism over $Y$
\begin{equation}\label{double split epi}
\vcenter{\xymatrix@!0@=5em{ D \ar@<-.5ex>[d]_{g'} \ar@<-.5ex>[r]_{f'} & C
\ar@<-.5ex>[d]_g
\ar@<-.5ex>[l]_-{s'} \\
A \ar@<-.5ex>[u]_{t'} \ar@<-.5ex>[r]_f & Y \ar@<-.5ex>[l]_s
\ar@<-.5ex>[u]_t }}
\end{equation}
(meaning that the four ``obvious'' squares commute)  is a \emph{regular pushout}, i.e. the comparison morphism $\la g',f'\ra\colon D\to A\times_Y C$ is a regular epimorphism.
\end{itemize}

As a direct consequence of \textsf{(M3)}, we have a trivial comparison result in a pointed context.

\begin{proposition}\label{w-Maltsev obj does not imply Maltsev object} Let $\C$ be a pointed finitely complete category. Then the zero object is a W-Mal'tsev object, but it is not necessarily a Mal'tsev object.
\end{proposition}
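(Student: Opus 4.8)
The plan is to prove the two assertions of the proposition separately. The first---that $0$ is a W-Mal'tsev object---follows from the fact that hom-sets out of the zero object are trivial; the second---that $0$ need not be a Mal'tsev object---follows from \textsf{(M3)} together with a pointed, finitely complete category that is not unital.

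For the first assertion, I would argue as follows. Since $0$ is in particular an initial object, $\C(0,X)$ is a singleton for every object $X$ of $\C$. Hence, for every relation $\la r_1,r_2\ra\colon R\to X\times Z$ in $\C$, all three sets in the $\Set$-span \eqref{W-difunctional} with $Y=0$ are singletons, and a span of singletons always determines a difunctional relation in $\Set$. Equivalently, in the element notation of \eqref{difunctionality}: the only morphisms $0\to X$ and $0\to Z$ are the respective zero morphisms, so necessarily $x_1=x_2$ and $z_1=z_2$, whence the hypothesis already contains $(x_1,z_1)=(x_2,z_2)\in_0 R$, which is the required conclusion. Thus $0\in\W(\C)$ for every pointed finitely complete $\C$.

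For the second assertion, I would invoke \textsf{(M3)}: for a pointed finitely complete $\C$, the object $0$ is a Mal'tsev object if and only if $\C$ is a unital category. So it suffices to exhibit a pointed, finitely complete, non-unital category, and the category $\Set_*$ of pointed sets does the job. It is pointed (the zero object is the one-point set) and finitely complete (limits are formed as in $\Set$ with the obvious base point). To see that it is not unital, take the two-element pointed set $A=C=\{0,1\}$ with base point $0$: the images of $\la 1_A,0\ra\colon A\to A\times C$ and $\la 0,1_C\ra\colon C\to A\times C$ are $A\times\{0\}$ and $\{0\}\times C$, and their union does not contain $(1,1)$; since the strong epimorphisms of $\Set_*$ are exactly the surjections, the pair $(\la 1_A,0\ra,\la 0,1_C\ra)$ is not jointly strongly epimorphic. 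Hence $0\notin\M(\Set_*)$, while $0\in\W(\Set_*)$ by the first part, which proves the proposition.

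The statement is essentially a pair of observations rather than a hard theorem, so there is no real obstacle; the only point requiring care is the verification that $\Set_*$ is not unital, that is, the description of its strong epimorphisms and of the images of the two coprojections into a binary product. Any other pointed, finitely complete, non-unital category would serve equally well in place of $\Set_*$.
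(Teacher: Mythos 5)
Your proof is correct and follows essentially the same route as the paper's: the first part uses that the only generalised elements of shape $0\to X$ are zero morphisms, so difunctionality is automatic, and the second part invokes \textsf{(M3)}. The only difference is that you additionally exhibit the explicit non-unital pointed category $\Set_*$ (correctly verified), whereas the paper leaves the existence of such a category implicit.
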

\begin{proof} The zero object 0 is always a W-Mal'tsev object in any pointed category. Indeed, given any relation $R\to X\times Z$, the only generalised elements we can use are the zero morphisms $0_X\colon 0\to X$ and $0_Z\colon 0\to Z$. So, the implication in \eqref{difunctionality} obviously holds when $Y=0$. On the other hand, 0 is only a Mal'tsev object when $\C$ is a unital category by \textsf{(M3)}.
\end{proof}

When $\C$ is a regular category, then both $\W(\C)$ and $\M(\C)$ are closed under quotients in $\C$ -- \textsf{(M4)} and \textsf{(W2)}, where the later actually holds for any finitely complete $\C$. If $\C$ also has binary coproducts, we may use \textsf{(W5)} to conclude that the notion of Mal'tsev object is stronger than that of W-Mal'tsev object.

\begin{proposition}\label{Maltsev obj => W-Maltsev obj} Let $\C$ be a regular category with binary coproducts. If $Y$ is a Mal'tsev object in $\C$, then it is a W-Mal'tsev object in $\C$.
\end{proposition}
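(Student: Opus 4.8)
The plan is to invoke the characterisation \textsf{(W5)}: since $\C$ is regular with binary coproducts, $Y$ is a W-Mal'tsev object as soon as, in the (regular epimorphism, monomorphism)-factorisation \eqref{iotas}, one has $(\iota_1,\iota_1)\in_Y R$. Fix notation accordingly: write $e\colon 3Y\to R$ for the regular epimorphism, $\langle r_1,r_2\rangle\colon R\to 2Y\times 2Y$ for the monomorphism, $j_1,j_2,j_3\colon Y\to 3Y$ for the three coprojections, and $\nabla\colon 2Y\to Y$ for the codiagonal. By construction $ej_1$, $ej_2$, $ej_3$ witness respectively $(\iota_1,\iota_2)$, $(\iota_2,\iota_2)$, $(\iota_2,\iota_1)\in_Y R$, so that each $r_i(ej_k)$ is a known coprojection and $\nabla\iota_i=1_Y$ throughout.

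First I would check that $\nabla r_1=\nabla r_2$: precomposing either side with the epimorphism $e$ and testing on each of the three summands of $3Y$ yields $1_Y$ every time. Hence $\langle r_1,r_2\rangle$ factors through the pullback $2Y\times_Y 2Y$ of $\nabla$ along $\nabla$; since $\langle r_1,r_2\rangle$ is monic, the induced morphism $\varphi\colon R\to 2Y\times_Y 2Y$ is monic too. Next — the heart of the proof — I would turn the span $2Y\stackrel{r_1}{\longleftarrow}R\stackrel{r_2}{\longrightarrow}2Y$ into a double split epimorphism over $Y$ as in \eqref{double split epi}: take $D=R$, $A=C=2Y$, base $Y$, with $g'=r_1$, $f'=r_2$, $f=g=\nabla$, $s=t=\iota_2$, and sections $t'=[ej_1,ej_2]\colon 2Y\to R$ of $r_1$ and $s'=[ej_3,ej_2]\colon 2Y\to R$ of $r_2$. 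A routine verification — using only the identities $r_i(ej_k)=\iota_\bullet$ and $\nabla\iota_\bullet=1_Y$ recorded above — shows that $t'$ and $s'$ really are sections and that the four squares of \eqref{double split epi} commute.

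Since $Y$ is a Mal'tsev object and $\C$ is regular, \textsf{(M5)} applies to this double split epimorphism: it is a regular pushout, i.e. the comparison morphism $R\to A\times_Y C=2Y\times_Y 2Y$ is a regular epimorphism. But that comparison morphism is exactly $\varphi$, which we already know to be a monomorphism; a morphism that is both a monomorphism and a (strong, hence in particular regular) epimorphism is an isomorphism, so $\varphi$ is an isomorphism. Thus $\langle r_1,r_2\rangle\colon R\to 2Y\times 2Y$ and the inclusion $2Y\times_Y 2Y\to 2Y\times 2Y$ represent the same subobject. Since $\langle\iota_1,\iota_1\rangle\colon Y\to 2Y\times 2Y$ factors through $2Y\times_Y 2Y$ (because $\nabla\iota_1=1_Y=\nabla\iota_1$), it factors through $R$, which is precisely $(\iota_1,\iota_1)\in_Y R$; by \textsf{(W5)}, $Y$ is a W-Mal'tsev object. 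The one genuinely delicate point is the second step: one must hit upon the correct base-points $s=t=\iota_2$ and sections $t',s'$ so that every commutativity in \eqref{double split epi} holds — once the double split epimorphism is in place, the collapse ``monic $+$ regular epi $=$ iso'' finishes the argument almost for free.
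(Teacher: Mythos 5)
Your proof is correct and takes essentially the same route as the paper's: both reduce to \textsf{(W5)} by applying \textsf{(M5)} to a double split epimorphism over $Y$ built from the codiagonals $\nabla\colon 2Y\to Y$ split by $\iota_2$, thereby identifying the image $R$ of \eqref{iotas} with the kernel pair $\Eq(\nabla)=2Y\times_Y 2Y$, through which $\la \iota_1,\iota_1\ra$ evidently factors. The only cosmetic difference is that the paper applies \textsf{(M5)} directly to the double split epimorphism with apex $3Y$ (whose comparison map to $\Eq(\nabla)$ at once yields the (regular epi, mono) factorisation), whereas you first form the image $R$ and transport the sections along $e$ to get a double split epimorphism with apex $R$ — the two configurations differ by composition with $e$ and give the same conclusion (note only that your parenthetical should read ``regular, hence strong'' rather than the reverse).
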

\begin{proof}
Consider the double split epimorphism over $Y$
\[
\xymatrix@!0@=8em{
	3Y \ar@<-.5ex>[d]_{\threesum{\iota_1}{\iota_2}{\iota_2}} \ar@<-.5ex>[r]_{\threesum{\iota_2}{\iota_2}{\iota_1}} & 2Y
\ar@<-.5ex>[d]_-{\nabla} \ar@<-.5ex>[l]_-{\iota_{32}=\twosum{\iota_3}{\iota_2}} \\
2Y \ar@<-.5ex>[u]_{\iota_{12}=\twosum{\iota_1}{\iota_2}} \ar@<-.5ex>[r]_{\nabla=\twosum{1_Y}{1_Y}} & Y, \ar@<-.5ex>[l]_{\iota_2}
\ar@<-.5ex>[u]_{\iota_2} }
\]
which is a regular pushout by assumption. We then get a (regular epimorphism, monomorphism) factorisation as in \eqref{iotas}, where $R=\Eq(\nabla)$ is the kernel pair of $\nabla$.
It easily follows that $(\iota_1,\iota_1)\in_Y R$, since $R=\Eq(\nabla)$ and  $\nabla\iota_1=\nabla\iota_1$. This shows that $Y$ is a W-Mal'tsev object by \textsf{(W5)}.
\end{proof}

An interesting question is whether $\W(\C)$ and $\M(\C)$ are themselves Mal'tsev categories. From \textsf{(W3)} we know that, if $\C$ is a well-powered regular category with coproducts, then $\W(\C)$ is finitely complete. Also, if every relation in $\W(\C)$ is also a relation in $\C$, then the full subcategory $\W(\C)$ is a Mal'tsev category; actually, it is the largest one which is closed under binary coproducts and quotients in $\C$ (see \textsf{(W4)}). On the other hand, there are no ``obvious'' conditions on $\C$ from which we could deduce finite completeness for $\M(\C)$. So, we can only conclude that $\M(\C)$ is a Mal'tsev category when it is closed under finite limits in $\C$; this is \textsf{(M2)}. When this is the case, we still do not know whether $\M(\C)$ is the largest full subcategory of $\C$ which is a Mal'tsev category.

In~\cite{MRVdL} is was shown that the Mal'tsev objects in $\Mon$ are precisely the groups (Theorem 6.14). This is also the case with respect to W-Mal'tsev objects in $\Mon$. It follows from the above and the next proposition that $\Grp$ is the largest full subcategory of $\Mon$ which is a Mal'tsev category and is closed under binary coproducts and quotients in $\Mon$.

\begin{proposition}\label{W-Maltsev objs in Mon are groups}
A monoid $Y$ is a W-Mal'tsev object in $\Mon$ if and only if $Y$ is a group.
\end{proposition}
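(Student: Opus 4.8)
The plan is to prove both implications, using the characterisation \textsf{(W5)} of W-Mal'tsev objects via the factorisation~\eqref{iotas}, since $\Mon$ is a regular category with binary coproducts (the coproduct being the free product of monoids).

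For the ``if'' direction, suppose $Y$ is a group. Then $Y$ is a Mal'tsev object in $\Mon$ by Theorem~6.14 of~\cite{MRVdL}, and hence a W-Mal'tsev object by Proposition~\ref{Maltsev obj => W-Maltsev obj}. Alternatively, and more self-containedly, one can argue directly: given a relation $\la r_1,r_2\ra\colon R\to X\times Z$ in $\Mon$ and group elements $x_1,x_2\colon Y\to X$, $z_1,z_2\colon Y\to Z$ with $(x_1,z_2),(x_2,z_2),(x_2,z_1)\in_Y R$, one wants $(x_1,z_1)\in_Y R$. Since $R$ is a submonoid of $X\times Z$, it is closed under the monoid operation, so from $(x_1(y),z_2(y))$, $(x_2(y'),z_2(y'))$, $(x_2(y''),z_1(y''))\in R$ one can form products; the Mal'tsev-type combination $(x_1(y)x_2(y')^{-1}x_2(y''),\,z_2(y)z_2(y')^{-1}z_1(y''))$ lies in $R$ provided the inverses of the middle terms are also realised in $R$, which holds because $(x_2(y'),z_2(y'))\in R$ forces $(x_2(y')^{-1},z_2(y')^{-1})\in R$ once we know these inverses exist in $X$ and $Z$ --- but here is the subtlety: $X$ and $Z$ need not be groups, so $x_2(y')^{-1}$ need not exist in $X$. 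The clean route is therefore to instantiate with the universal relation of~\eqref{iotas}: take $R$ to be the image of $3Y$ in $2Y\times 2Y$ under $\left(\begin{smallmatrix}\iota_1 & \iota_2\\ \iota_2 & \iota_2\\ \iota_2 & \iota_1\end{smallmatrix}\right)$, and show $(\iota_1,\iota_1)\in_Y R$. Concretely, an element of $R$ is a pair of words in $2Y$ arising from a word $w(a,b,c)$ in $3Y$ by the substitutions $(a,b,c)\mapsto(\iota_1,\iota_2,\iota_2)$ in the first coordinate and $(a,b,c)\mapsto(\iota_2,\iota_2,\iota_1)$ in the second; using that $Y$ is a group one exhibits a word $w$ for which the first substitution gives $\iota_1$ and the second also gives $\iota_1$ (for instance a word realising the Mal'tsev term $p(a,b,c)=ab^{-1}c$ in the free product, which sends $(\iota_1,\iota_2,\iota_2)\mapsto\iota_1\iota_2^{-1}\iota_2=\iota_1$ and $(\iota_2,\iota_2,\iota_1)\mapsto\iota_2\iota_2^{-1}\iota_1=\iota_1$), so $(\iota_1,\iota_1)\in_Y R$ and \textsf{(W5)} gives that $Y$ is W-Mal'tsev.

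For the ``only if'' direction, suppose $Y$ is a W-Mal'tsev object in $\Mon$; we must produce inverses. The idea is to choose a well-chosen relation $R\to X\times Z$ and generalised elements over $Y$ so that difunctionality~\eqref{difunctionality} forces the existence of an inverse for an arbitrary element of $Y$. A natural candidate mimics the varietal proof that W-Mal'tsev (difunctional) forces a Mal'tsev term: take $X=Z=2Y$ (or suitable free products involving $Y$), let $R$ be the submonoid of $X\times Z$ generated by the diagonal together with one extra generating pair encoding ``$\iota_1$ related to $\iota_2$'', and let $x_1,x_2,z_1,z_2$ be appropriate coprojections; then $(x_1,z_2),(x_2,z_2),(x_2,z_1)\in_Y R$ hold by construction while $(x_1,z_1)\in_Y R$ translates, via the normal-form description of elements of a submonoid of a free product, into an equation in $Y$ that can only hold if the chosen element of $Y$ has an inverse. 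One then concludes that $Y$ is a group.

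The main obstacle, and where the real work lies, is the ``only if'' direction: one must cook up the relation $R$ and the generalised elements precisely enough that (a) the three hypotheses of~\eqref{difunctionality} are transparently satisfied, and (b) the conclusion $(x_1,z_1)\in_Y R$, when unwound through the combinatorics of submonoids of free products of monoids, really does yield invertibility of an arbitrary $y\in Y$ rather than something weaker. Controlling ``which pairs lie in the submonoid generated by a given set of pairs'' inside $2Y\times 2Y$ is a word-combinatorics computation in free products, and getting the generators minimal enough to pin down invertibility --- without accidentally forcing $X$ or $Z$ to be groups in a way that trivialises the argument --- is the delicate point. (This is also essentially the argument behind Theorem~6.14 of~\cite{MRVdL} for Mal'tsev objects, transported to the relational setting, so one expects the same monoid-theoretic input, namely that a monoid in which the relevant cancellation/solvability equation always holds is a group.)
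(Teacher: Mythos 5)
Your ``if'' direction is fine: the paper takes exactly your first route (quote Theorem~6.14 of~\cite{MRVdL} and apply Proposition~\ref{Maltsev obj => W-Maltsev obj}), and your direct alternative via a word realising $p(a,b,c)=ab^{-1}c$ --- concretely $\iota_1(x)\,\iota_2(-x)\,\iota_3(x)\in 3Y$, whose two images in $2Y$ both reduce to $\iota_1(x)$ --- is also correct, since $\iota_2(x)$ is invertible in $2Y$ whenever $x$ is invertible in $Y$.

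The ``only if'' direction, however, is where the proposition actually lives, and there you have only described what a proof would have to do; you explicitly defer ``the real work'' of choosing $R$ and unwinding the word combinatorics. Two concrete points. First, there is nothing to cook up: \textsf{(W5)} already pins down the relation --- one takes $R$ to be the image of $\left(\begin{smallmatrix}\iota_1&\iota_2\\ \iota_2&\iota_2\\ \iota_2&\iota_1\end{smallmatrix}\right)\colon 3Y\to 2Y\times 2Y$, and the only thing to exploit is the conclusion $(\iota_1,\iota_1)\in_Y R$, i.e.\ that for each $x\in Y$ the pair $(\iota_1(x),\iota_1(x))$ has a preimage in $3Y$ under the regular epimorphism $e$. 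Second, the missing computation is the following. Write such a preimage as a word $\underline{u_1}\,\overline{v_1}\,\widetilde{w_1}\cdots\underline{u_k}\,\overline{v_k}\,\widetilde{w_k}$ with letters from the three copies of $Y$ in $3Y$. The two legs of the span send it to $\underline{u_1}\,\overline{v_1+w_1}\cdots\underline{u_k}\,\overline{v_k+w_k}$ and $\overline{u_1+v_1}\,\underline{w_1}\cdots\overline{u_k+v_k}\,\underline{w_k}$ in the free product $2Y=Y+Y$; equating these with $\underline{x}$ and using the normal form of words in a free product of monoids yields $v_i+w_i=0$ and $u_i+v_i=0$ for all $i$, together with $u_1+\cdots+u_k=x=w_1+\cdots+w_k$. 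Then $y=v_k+\cdots+v_1$ is a two-sided inverse of $x$, by the telescoping computations $x+y=u_1+\cdots+u_{k-1}+(u_k+v_k)+v_{k-1}+\cdots+v_1=\cdots=0$ and $y+x=v_k+\cdots+v_2+(v_1+w_1)+w_2+\cdots+w_k=\cdots=0$. Without this extraction of an explicit inverse from the normal form, the proof is incomplete; and your worry about ``accidentally forcing $X$ or $Z$ to be groups'' is moot once you observe that \textsf{(W5)} leaves you no choice of relation to make.
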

\begin{proof}
If $Y$ is a group, then it is a Mal'tsev object in $\Mon$; thus, it is a W-Mal'tsev object in $\Mon$ by Proposition~\ref{Maltsev obj => W-Maltsev obj}. For the converse, suppose that $(Y,+,0)$ is a W-Mal'tsev object in $\Mon$. We use additive notation although $Y$ is not necessarily an abelian monoid. Since $\Mon$ is a regular category with binary coproducts, we can apply \textsf{(W5)} to conclude that $(\iota_1,\iota_1)\in_Y R$, where $R$ is as in diagram~\eqref{iotas}. So, for any $x\in Y$, $x\neq 0$, we have $([\underline{x}], [\underline{x}])\in R$. (We use the notations $\iota_1(x)=[\,\underline{x}\,]$, $\iota_2(x)=[\,\overline{x}\,]$, for any $x\in Y$, for the coprojections $\iota_1,\iota_2\colon Y\to 2Y$, and $\iota_1(x)=[\,\underline{x}\,]$, $\iota_2(x)=[\,\overline{x}\,]$, $\iota_3(x)=[\widetilde{x}]$, for any $x\in Y$, for the coprojections $\iota_1,\iota_2, \iota_3\colon Y\to 3Y$.) Since $e\colon 3Y\to R$ is surjective, there exists an element $[\,\underline{u_1}\overline{v_1}\widetilde{w_1}\cdots \underline{u_k}\overline{v_k}\widetilde{w_k}\,]\in 3Y$ such that $e([\,\underline{u_1}\overline{v_1}\widetilde{w_1}\cdots \underline{u_k}\overline{v_k}\widetilde{w_k}\,])=([\underline{x}], [\underline{x}])$.  We deduce that
\begin{center} 
$\left\{ \begin{array}{l} \vspace{20pt} \end{array}\right.$\hspace{-10pt}
\begin{tabular}{l}
	$[\, \underline{u_1}\overline{v_1+w_1}\cdots \underline{u_k}\overline{v_k+w_k}\,]= [\underline{x}]$ \vspace{3pt}\\
	$[\,\overline{u_1+v_1}\underline{w_1}\cdots \overline{u_k+v_k}\underline{w_k}\,] = [\underline{x}]$;
\end{tabular}
\end{center}
consequently
\[
\left\{
\begin{array}{l}
	v_1+w_1=0, \cdots, v_k+w_k=0,\;\;\mathrm{and}\;\; u_1+\cdots+ u_k=x  \vspace{3pt}\\
	u_1+v_1=0, \cdots, u_k+v_k=0,\;\;\mathrm{and}\;\; w_1+\cdots+w_k=x.
\end{array}\right.
\]
We may define the element $y=v_k+\cdots + v_1$ of $Y$ which is the inverse of $x$:
\[
\begin{array}{l}
 x+y = u_1+\cdots + u_{k-1} + (u_k + v_k) + v_{k-1} \cdots + v_1 = u_1+\cdots + (u_{k-1} + v_{k-1}) \cdots + v_1  = \cdots = 0,\\
 y+x=v_k+\cdots +v_2 + (v_1+w_1)+ w_2 +\cdots + w_k=v_k+\cdots + (v_2 + w_2) +\cdots + w_k = \cdots =0.
\end{array}
\]
\end{proof}

We finish this section with the example of (W-)Mal'tsev objects in $\OrdGrp$, the category of preordered groups. We denote a preordered group by $(Y,+,\le)$, even though the associated group $(Y,+,0)$ is not necessarily abelian. Recall from~\cite{CM-FM} that the \emph{positive cone} of $Y$, $P_Y=\{x\in Y: 0\le x\}$, is always a submonoid of $Y$ which is closed under conjugation in $Y$. It was shown in~\cite{CM-FM} that a preordered group $(Y,+,\le)$ is a Mal'tsev object if and only if the preorder relation $\le$ is an equivalence relation if and only if $P_Y$ is a group. We follow a similar argument as that of the proof of Proposition~\ref{W-Maltsev objs in Mon are groups} to analyse, in the next example, properties of the positive cone of a W-Mal'tsev object in $\OrdGrp$.

\begin{example}\label{Ex in Ordgrp}
Let  $(Y,+,\le)$ be a W-Mal'tsev object in $\OrdGrp$. Since $\OrdGrp$ is a regular category with binary coproducts, we can apply \textsf{(W5)} to conclude that $(\iota_1,\iota_1)\in_Y R$, where $R$ is as in diagram~\eqref{iotas}. So, for any $x\in P_Y$, $x\neq 0$, we have $([\underline{x}], [\underline{x}])\in P_R$.  Since $e$ is a regular epimorphism, there exists a positive element $[\,\underline{u_1}\overline{v_1}\widetilde{w_1}\cdots \underline{u_k}\overline{v_k}\widetilde{w_k}\,]\in P_{3Y}$ such that $e([\,\underline{u_1}\overline{v_1}\widetilde{w_1}\cdots \underline{u_k}\overline{v_k}\widetilde{w_k}\,])=([\underline{x}], [\underline{x}])$.  We deduce that
\begin{center} 
$\left\{ \begin{array}{l} \vspace{20pt} \end{array}\right.$\hspace{-10pt}
\begin{tabular}{l}
	$[\, \underline{u_1}\overline{v_1+w_1}\cdots \underline{u_k}\overline{v_k+w_k}\,]= [\underline{x}]$ \vspace{3pt}\\
	$[\,\overline{u_1+v_1}\underline{w_1}\cdots \overline{u_k+v_k}\underline{w_k}\,] = [\underline{x}]$;
\end{tabular}
\end{center}
consequently
\[
\left\{
\begin{array}{l}
	v_1+w_1=0, \cdots, v_k+w_k=0,\;\;\mathrm{and}\;\; u_1+\cdots + u_k=x  \vspace{3pt}\\
	u_1+v_1=0, \cdots, u_k+v_k=0,\;\;\mathrm{and}\;\; w_1+\cdots+w_k=x.
\end{array}\right.
\]
We conclude that $u_1=w_1, \cdots, u_k=w_k$ and the above positive element of $3Y$ has the shape $[\,\underline{u_1}\overline{\mbox{-}u_1}\widetilde{u_1}\cdots \underline{u_k}\overline{\mbox{-}u_k}\widetilde{u_k}\,]$. The monotone morphism
\[
	\begin{array}{rccc}
		\threesum{0}{1_Y}{0} \colon & 3Y & \to & Y \\
		& [\,\underline{u_1}\overline{\mbox{-}u_1}\widetilde{u_1}\cdots \underline{u_k}\overline{\mbox{-}u_k}\widetilde{u_k}\,] & \mapsto & -u_1 - \cdots - u_k
	\end{array}
\]
takes positive elements of $3Y$ to positive elements of $Y$; thus $-u_1-\cdots - u_k\in P_Y$.

If $Y$ is an abelian group, then from $x=u_1+\cdots + u_k\in P_Y$, we deduce $-x=-u_1-\cdots - u_k\in P_Y$, i.e. $P_Y$ is a group. In this case W-Mal'tsev objects and Mal'tsev objects coincide in $\OrdGrp$. We do not know whether the two notions coincide for the non-abelian case.
\end{example}

\section{$V$-categories and (W-)Mal'tsev objects}

In this section we will generalise the characterisations of W-Mal'tsev objects in the duals of the category of metric spaces and of the category of topological spaces obtained in \cite{Weighill}. For that we will make use of the concepts of $V$-category and of $(U,V)$-category. Here, as in \cite{Law}, the notion of $V$-category will play the role of metric space, while a $(U,V)$-category (as introduced in \cite{CT03}) will play the role of a topological space. We start by presenting the basic tools of this approach.

Throughout $V$ is a unital and integral quantale; that is, $V$ is a complete lattice equipped with a tensor product $\otimes$, with unit $k=\top\neq\bot$, that distributes over arbitrary joins. As a category, $V$ is a monoidal closed category.

When more than one tensor product may be considered, we use the notation $V_\otimes$ to indicate that we are using the tensor product $\otimes$ in $V$.

\begin{definition}
A \defn{$\mathbf{\emph{V}}$-category} is a set $X$ together with a map $X\times X\to V$, whose image of $(x,x')$ we denote by $X(x,x')$, such that, for each $x,x',x''\in X$,
\begin{enumerate}
\item[(R)] $k\leq X(x,x)$;
\item[(T)] $X(x,x')\otimes X(x',x'')\leq X(x,x'')$.
\end{enumerate}
A $V$-category is said to be \emph{symmetric} if, for every $x,x'\in X$, $X(x,x')=X(x',x)$.\\
A \emph{$V$-functor} $f\colon X\to Y$ is a map such that, for all $x,x'\in X$, $X(x,x')\leq Y(f(x),f(x'))$.
\end{definition}

The two axioms of a $V$-category express the existence of identities and the categorical composition law, but they may also be seen as a reflexivity and a transitivity condition, or even as two conditions usually imposed to metric structures: if in the complete half real line $[0,\infty]$ we consider the order relation $\geq$ and the tensor product $+$, (R) and (T) read as
\begin{enumerate}
\item[(R')] $0\geq X(x,x)$;
\item[(T')] $X(x,x')+X(x',x'')\geq X(x,x'')$,
\end{enumerate}
with (R) meaning that the distance from a point to itself is $0$, and (T) the usual triangular inequality.

We denote by $\VCat$ the category of $V$-categories and $V$-functors, and by $\VCat_\sym$ its full subcategory of symmetric $V$-categories.

\begin{remark}
When $V$ is a complete lattice which is a \emph{frame}, so that finite meets distribute over arbitrary joins, then $V_\wedge=(V,\leq,\wedge,\top)$ is a (unital and integral) quantale. We call such quantales \emph{cartesian}. It is well-known that, if $V_\wedge$ is a cartesian quantale, then the category $V_\wedge\mbox{-}\Cat$ has special features, like being cartesian closed~\cite{Law}. Here we will show that it also has a key role in the study of W-Mal'tsev objects of $(V_\wedge\mbox{-}\Cat)^\op$.

Moreover, even if the quantale $V_\otimes$ is not a frame, the $V_\otimes$-categories $X$ which, in addition, verify
\[X(x,x')\wedge X(x',x'')\leq X(x,x'')\]
will be specially relevant, and we will call them also $V_\wedge$-categories. We note that, since $k=\top$, they also satisfy $\top\leq X(x,x)$ for every $x\in X$; moreover, $u\otimes v\leq u\wedge v$ always holds, so the above condition guarantees immediately condition (T) of the above definition.
\end{remark}

\begin{examples}
\begin{enumerate}
\item When $V=2=(\{0<1\},\wedge,1)$, a $V$-category is a preordered set and a $V$-functor is a monotone map. Hence $\VCat$ is the well-known category $\Ord$ of preordered sets and monotone maps.

\item When $V=[0,\infty]_+=([0,\infty],\geq,+,0)$, that is $V$ is the complete lattice $[0,\infty]$, ordered by $\geq$, with tensor product $\otimes=+$, a $V$-category is a (\emph{generalised}) \emph{Lawvere metric space} \cite{Law} (not necessarily separated nor symmetric, with $\infty$ as a possible distance), since the two conditions above mean that $X(x,x)=0$ and $X(x,y)+X(y,z)\geq X(x,z)$, for $x,y,z\in X$, and a $V$-functor is a \emph{non-expansive map}.\\
    When $V$ is the cartesian quantale $[0,\infty]_\max=([0,\infty],\geq,\max,0)$, with tensor product $\otimes=\max$, then $V_\max\mbox{-}\Cat$ is the category of (\emph{generalised}) \emph{ultrametric spaces}.

\item The complete lattice $([0,1],\leq)$ can be equipped with several tensor products -- usually called \emph{t-norms} -- including the \emph{Lukasiewicz sum}, which lead to interesting instances of categories of the form $\VCat$, like Lawvere metric spaces, ultrametric spaces, and \emph{bounded metric spaces}.

\item The set of \emph{distribution functions}
\[\Delta=\{\varphi\colon[0,\infty]\to[0,1]\,;\mbox{ for all }\alpha\in[0,\infty]\; \;\varphi(\alpha)=\bigvee_{\beta<\alpha}\,\varphi(\beta)\]
with the pointwise order is a complete lattice. The tensor product is defined, for each $\varphi,\psi\in\Delta$, by
\[(\varphi\otimes\psi)(\alpha)=\bigvee_{\beta+\gamma\leq\alpha}\,\varphi(\beta)\times\psi(\gamma),\]
having as unit the distribution function $\kappa\colon[0,\infty]\to[0,1]$, with $\kappa(\alpha)=0$ if $\alpha=0$ and $\kappa(\alpha)=1$ otherwise. Then $\Delta$-$\Cat$ is the category of \emph{probabilistic metric spaces} and (\emph{probabilistic}) \emph{non-expansive} maps.
\end{enumerate}
For more details and examples see for instance \cite{CH17}.
\end{examples}

The forgetful functor $\VCat\to\Set$ is topological, hence $\VCat$ is complete and cocomplete, with limits and colimits formed as in $\Set$ and equipped with the corresponding initial and final $V$-category structures, respectively. In \cite[Corollary 8]{MST} it is shown that, under suitable conditions, $\VCat$ is an extensive category. In $\VCat$ epimorphisms are pullback-stable, since they are exactly surjective $V$-functors and pullbacks are formed as in $\Set$. Therefore, as shown in \cite[Proposition 3]{MF}, \emph{$(\VCat)^\op$ is a weakly Mal'tsev category}. In addition, in \cite[Theorem 4.6]{HN23} it is shown that \emph{$(\VCat)^\op$ is a quasi-variety}, so in particular it is a regular category.
Still, we think it is worth to prove here directly that \emph{$(\VCat)^\op$ is a regular category}. Indeed, in $\VCat$ regular monomorphisms coincide with extremal monomorphisms, and are exactly the injective maps $f\colon X\to Y$ such that $X(x,x')=Y(f(x),f(x'))$, for all $x,x'\in X$. Moreover, $\VCat$ has the stable orthogonal factorisation system (epimorphism, regular monomorphism), which factors every $V$-functor $f\colon X\to Y$ as
\[\xymatrix{X\ar[rr]^f\ar[rd]_{e}&&Y\\
&Z=f(X)\ar[ru]_m&}\]
where $e$ is the corestriction of $f$ to $Z=f(X)$ and $Z(y,y')=Y(y,y')$ for all $y,y'\in Z$. Given a pushout in $\VCat$
\[\xymatrix{X\ar[r]^m\ar[d]_f&Z\ar[d]^g\\
Y\ar[r]_n&W}\] with $m$ a regular monomorphism, for simplicity we assume that $m$ is an inclusion. We know that, since pushouts are formed as in $\Set$, we may consider $W=(Y+Z)/\sim$, where, for $y\in Y$ and $z\in Z$, $y\sim z$ exactly when $z\in X$ and $f(z)=y$, and $n$ an inclusion. Then, for every $y,y'\in Y$,
\[\begin{array}{rcll}
W([y],[y'])&=&\displaystyle Y(y,y')\vee\bigvee_{x\sim y,\,x'\sim y'}\,Z(x,x')\\
&=&\displaystyle Y(y,y')\vee\bigvee_{x\sim y,\,x'\sim y'}\,X(x,x')&\mbox{(because $m$ is a regular monomorphism)}\\
&=&Y(y,y')&\mbox{(because $f$ is a $V$-functor)}
\end{array}\]
and therefore the inclusion $n\colon Y\to W$ is a regular monomorphism.

Hence, since $(\VCat)^\op$ is a regular category with binary coproducts we know by \textsf{(W5)} that a $V$-category $Y$ is a W-Mal'tsev object if, and only if, given the $V$-functor $f\colon Y^2+Y^2\to Y^3$ defined by
\[\xymatrix{Y^2\ar@/^.7pc/[rrd]^{\la\pi_1,\pi_2,\pi_2\ra}\ar[rd]_{\iota_1}\\
&Y^2+Y^2\ar[r]|-{\,f\,}&Y^3\\
Y^2\ar[ru]^{\iota_2}\ar@/_.7pc/[rru]_{\la\pi_2,\pi_2,\pi_1\ra}}\]
where $\pi_i\colon Y^2\to Y$, $i=1,2$, are the product projections, and its (epimorphism, regular mo\-no\-mor\-phism)-factorisation in $\VCat$
\begin{equation}\label{eq:fact}
(\xymatrix{Y^2+Y^2\ar[r]^-f&Y^3})=(\xymatrix{Y^2+Y^2\ar[r]^-e&X\ar[r]^-m&Y^3})\end{equation}
there is a unique $V$-functor $g\colon X\to Y$ such that the following diagram commutes
\begin{equation}\label{eq:g}\xymatrix{Y^2\ar@/^.7pc/[rrrd]^{\pi_1}\ar[rd]_{\iota_1}\\
&Y^2+Y^2\ar[r]|-{\,e\,}&X\ar[r]|{\,g\,}&Y\\
Y^2\ar[ru]^{\iota_2}\ar@/_.7pc/[rrru]_{\pi_1}}\end{equation}
As a set, $X$ is the image of the map $f$, that is \[X=X_1\cup X_2, \mbox{ with }X_1=\{(x,y,y)\,;\,x,y\in Y\},\;X_2=\{(x,x,y)\,;\,x,y\in Y\};\]
its $V$-category structure is inherited from $Y^3$, that is
\[X((x,y,z),(x',y',z'))=Y(x,x')\wedge Y(y,y')\wedge Y(z,z').\]
The map $g\colon X\to Y$ must assign $x$ both to each $(x,y,y)$ and each $(y,y,x)$.

\begin{theorem}\label{thm for V-cats}
Let $Y$ be a $V$-category.
\begin{enumerate}
\item The following conditions are equivalent:
\begin{enumerate}
\item[(i)] $Y$ is a W-Mal'tsev object in $(\VCat)^\op$;
\item[(ii)] for all $x,y,z\in Y$, $Y(z,x)\wedge Y(z,y)\leq Y(x,y)$;
\item[(iii)] $Y$ is a symmetric $V_\wedge$-category.
\end{enumerate}
\item If $V$ is not a cartesian quantale, then $Y$ is a Mal'tsev object in $(\VCat)^\op$ if, and only if, $Y=\emptyset$.
\end{enumerate}
\end{theorem}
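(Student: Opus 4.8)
The plan is to treat the two parts separately: part~(1) through the criterion for W-Mal'tsev objects in $(\VCat)^\op$ recalled just before the statement, and part~(2) through the closure property \textsf{(M4)}, which lets us reduce an arbitrary nonempty $Y$ to the terminal object.

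For part~(1), the key initial remark is that the commutativity of~\eqref{eq:g} already forces the underlying map of $g$: it must send $(x,y,y)\in X_1$ to $x$ and $(x,x,y)\in X_2$ to $y$, and these two prescriptions agree on $X_1\cap X_2$, so together they define a map $g\colon X\to Y$. Hence $Y$ is a W-Mal'tsev object if and only if this $g$ is a $V$-functor, i.e. $X(u,u')\leq Y(g(u),g(u'))$ for all $u,u'\in X$. I would verify this in the four cases according to which of $X_1,X_2$ contain $u,u'$: the cases $u,u'\in X_1$ and $u,u'\in X_2$ hold automatically, since a meet of three distances lies below the single relevant factor, while the two mixed cases amount respectively to
\[
Y(x,p)\wedge Y(y,p)\wedge Y(y,q)\leq Y(x,q)\quad\mbox{for all }x,y,p,q\in Y,\qquad(\star)
\]
and to the opposite inequality $(\star^\op)$, obtained by reversing every $Y(-,-)$. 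So $g$ is a $V$-functor precisely when both $(\star)$ and $(\star^\op)$ hold. To finish I would run the loop $(\star)\Rightarrow(\mathrm{ii})\Rightarrow(\mathrm{iii})\Rightarrow(\star)\wedge(\star^\op)$: putting $x=p$ in $(\star)$ and using $Y(p,p)=k=\top$ gives exactly~(ii); from~(ii), two suitable specialisations (again using $Y(x,x)=\top$) yield $Y(x,y)\leq Y(y,x)$ and $\wedge$-transitivity, that is~(iii); and in a symmetric $V_\wedge$-category both $(\star)$ and $(\star^\op)$ follow by two uses of $\wedge$-transitivity after one symmetry step. Hence (i) $\Leftrightarrow$ [$g$ is a $V$-functor] $\Leftrightarrow$ $(\star)$ $\Leftrightarrow$ (ii) $\Leftrightarrow$ (iii).

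For part~(2), one direction is quick: $\emptyset$ is the terminal object of $(\VCat)^\op$, a split epimorphism onto it in $(\VCat)^\op$ is a split monomorphism $\emptyset\to A$ in $\VCat$, which forces $A=\emptyset$; so $\Pt_\emptyset((\VCat)^\op)$ is the terminal category, which is pointed and trivially unital, whence $\emptyset$ is a Mal'tsev object. For the converse I would argue in three steps. First, since $V$ is not cartesian it cannot be idempotent -- otherwise, in the integral unital quantale $V$, $u\wedge v=(u\wedge v)\otimes(u\wedge v)\leq u\otimes v\leq u\wedge v$ would force $\otimes=\wedge$ -- so there is $v\in V$ with $v\otimes v<v$. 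Second, for an arbitrary nonempty $Y$ pick a point $y_0\colon 1\to Y$ in $\VCat$; it is a split monomorphism, so its dual is a split (hence regular) epimorphism $Y\to 1$ in $(\VCat)^\op$, and by \textsf{(M4)} it therefore suffices to prove that $1$ is \emph{not} a Mal'tsev object in $(\VCat)^\op$. Third, since $\Pt_1((\VCat)^\op)$ is isomorphic to the category $(\VCat_*)^\op$ of pointed $V$-categories, this is the claim that $(\VCat_*)^\op$ is not unital, i.e. that for suitable pointed $V$-categories $A,C$ the comparison $\langle p,q\rangle\colon A\vee C\to A\times C$ -- where $A\vee C$ is the wedge (coproduct in $\VCat_*$) and $p,q$ collapse one summand to the basepoint -- fails to be a regular monomorphism. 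I would take $A=C=T$, the two-point pointed $V$-category $\{b,c\}$ with basepoint $b$, $T(b,c)=T(c,b)=v$ and $T(b,b)=T(c,c)=\top$. Then $\langle p,q\rangle$ is injective; but the wedge $T\vee T=\{c_1,b,c_2\}$ carries the final ($V$-colimit) structure, and every zig-zag joining its two summands passes through $b$, so $(T\vee T)(c_1,c_2)=v\otimes v$, whereas the images have distance $T(b,c)\wedge T(c,b)=v$ in $T\times T$. Since regular monomorphisms in $\VCat$ are exactly the injective $V$-functors reflecting the $V$-structure and $v\otimes v<v$, the map $\langle p,q\rangle$ is not a regular monomorphism; hence $(\VCat_*)^\op$ is not unital, $1$ is not a Mal'tsev object, and neither is any nonempty $Y$.

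The principal obstacle in both parts is organisational rather than conceptual: in~(1), isolating precisely condition~(ii) among the mixed-case inequalities $(\star),(\star^\op)$ and then bootstrapping symmetry and $\wedge$-transitivity from it; in~(2), recognising that one should reduce every nonempty $Y$ to the terminal object via \textsf{(M4)} instead of arguing case by case, and then correctly identifying the final $V$-structure on the wedge $T\vee T$ -- it is there that the element $v\otimes v$, strictly below $v$ exactly because $V$ is not cartesian, obstructs the comparison map from reflecting distances.
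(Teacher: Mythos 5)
Your proof is correct. Part (1) follows essentially the same route as the paper: you identify the forced underlying map $g$, reduce W-Mal'tsevness to $g$ being a $V$-functor, observe that the two ``pure'' cases are automatic, and close the cycle (i)$\Rightarrow$(ii)$\Rightarrow$(iii)$\Rightarrow$(i) by the same specialisations (your $(\star)$ with $x=p$ is exactly the paper's evaluation at $X((x,z,z),(x,x,y))$, and your two mixed cases are the paper's cases $x=y,\ y'=z'$ and $y=z,\ x'=y'$). In part (2) you make the same reduction to the one-point $V$-category via \textsf{(M4)} and a split monomorphism $1\to Y$, but then diverge: the paper invokes \textsf{(M5)} and exhibits a concrete double split epimorphism $A=\{0,1\}$, $C=\{1,2\}$, $D=\{0,1,2\}$ whose comparison $A+_YC\to D$ fails to be a regular monomorphism because the pushout over $Y=\{1\}$ carries the distance $u\otimes v<u\wedge v$; you instead unwind the definition directly, identifying $\Pt_1((\VCat)^\op)$ with $(\VCat_*)^\op$ and showing unitality fails because the comparison $\la p,q\ra\colon T\vee T\to T\times T$ from the wedge is injective but not distance-reflecting ($v\otimes v<v$ versus $v$). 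The two arguments are close cousins -- both hinge on the final structure of a pushout over a point being computed with $\otimes$ while the target uses $\wedge$ -- but yours avoids the regular-pushout characterisation \textsf{(M5)} at the cost of the standard translations ``jointly strongly epimorphic $\Leftrightarrow$ the induced map from the coproduct is a strong epi'' and ``strong epi $=$ regular epi in a regular category'', plus the observation that regular monomorphisms in $\VCat_*$ are created in $\VCat$; the paper's version needs only one non-isomorphic regular monomorphism and is marginally more self-contained, while yours makes the conceptual content (failure of unitality in the fibre of pointed objects) more visible. Your preliminary lemma that a non-cartesian integral quantale admits $v$ with $v\otimes v<v$ is a correct small addition not needed by the paper, which works directly with $u\otimes v<u\wedge v$.
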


\begin{proof}
(1)
(i) $\Rightarrow$ (ii): If the map $g\colon X\to Y$ from \eqref{eq:g} is a $V$-functor, then in particular
\[X((x,z,z),(x,x,y))=Y(x,x)\wedge Y(z,x)\wedge Y(z,y)\leq Y(g(x,z,z),g(x,x,y))=Y(x,y).\]
(ii) $\Rightarrow$ (iii): With $z=y$ in (ii) one gets
\[Y(y,x)=Y(y,x)\wedge Y(y,y)\leq Y(x,y),\]
hence $Y$ is symmetric. Then, for any $x,y,z\in Y$,
\[Y(x,z)\wedge Y(z,y)=Y(z,x)\wedge Y(z,y)\leq Y(x,y).\]
(iii) $\Rightarrow$ (i): We want to prove that $g$ is a $V$-functor; that is, for each $(x,y,z), (x',y',z')$ in $X$,
\[X((x,y,z),(x',y',z'))=Y(x,x')\wedge Y(y,y')\wedge Y(z,z')\leq Y(g(x,y,z),(x',y',z')).\]
In case $x=y$ and $x'=y'$, or in case $y=z$ and $y'=z'$, the inequality is trivially satisfied; when $x=y$ and $y'=z'$, using (iii), one obtains
\[Y(x,x')\wedge Y(x,y')\wedge Y(z,y')=Y(z,y')\wedge Y(y',x)\wedge Y(x,x')\leq Y(z,x'),\]
and in case $y=z$ and $x'=y'$ one gets
\[Y(x,x')\wedge Y(y,x')\wedge Y(y,z')=Y(x,x')\wedge Y(x',y)\wedge Y(y,z')\leq Y(x,z').\]

\noindent (2) Since $(\VCat)^\op$ is regular we may use the characterisation of Mal'tsev object of \textsf{(M5)}. First we point out that a double split epimorphism \eqref{double split epi} in $(\VCat)^\op$ is also a double split epimorphism in $\VCat$, with the role of split epimorphisms and split monomorphisms interchanged:
\begin{equation}\label{eq:op}
\vcenter{\xymatrix@!0@=5em{ D \ar@<.5ex>[d]^{t'} \ar@<.5ex>[r]^{s'} & C
\ar@<.5ex>[d]^t
\ar@<.5ex>[l]^-{f'} \\
A \ar@<.5ex>[u]^{g'} \ar@<.5ex>[r]^s & Y. \ar@<.5ex>[l]^f
\ar@<.5ex>[u]^g }}
\end{equation}
(this diagram is just the dual of \eqref{double split epi}).

Let $V$ be a non-cartesian quantale. To show that the only Mal'tsev object in $(\VCat)^\op$ is the empty set it is enough to check that $Y=\{1\}$, with $Y(1,1)=k$, is not a Mal'tsev object, since by (M4) Mal'tsev objects are closed under quotients and, for any non-empty $V$-category $Z$, any map $Y\to Z$ is a split monomorphism in $\VCat$.

We will build a double split epimorphism in $\VCat$ as above
 so that it is not a \emph{regular pullback}, i.e. the comparison morphism $\langle g',f'\rangle\colon A+_YC\to D$ is not a regular monomorphism. If $V$ is not cartesian then there exist $u,v\in V$ such that $u\otimes v< u\wedge v$.
 Consider in \eqref{eq:op} the symmetric $V$-categories $A=\{0,1\}$, $C=\{1,2\}$, and $D=\{0,1,2\}$, with $A(0,1)=u$, $C(1,2)=v$, and $D(0,2)=u\wedge v$, with the obvious $V$-functors (with $s'(0)=1$ and $t'(2)=1$). Then \eqref{eq:op} is a double split epimorphism but the comparison morphism $\langle g',f'\rangle\colon A+_YC\to D$, which is in fact a bijection, is not a regular monomorphism because in the pushout $(A+_YC)(0,2)$ must be $u\otimes v$.
\end{proof}

\begin{remark}\label{re:R1}
We point out that condition (ii) in case $\otimes=\wedge$ is the separation axiom (R1) studied in \cite[Section 2.2]{CCT}.
\end{remark}

The two statements of Theorem~\ref{thm for V-cats} have an immediate consequence, showing how drastically different may be the two notions of Mal'tsev object in the context of $(\VCat)^\op$. For instance, if $V=[0,\infty]_+$, then a Lawvere metric space is a W-Mal'tsev object exactly when it is a symmetric ultrametric space, while it is a Mal'tsev object only if it is empty.

\begin{corollary}
\begin{enumerate}
\item If $V$ is a cartesian quantale, then $(\VCat_\sym)^\op$ is a Mal'tsev category.
\item For every unital and integral quantale $V$, the largest Mal'tsev subcategory of $(\VCat)^\op$ closed under binary coproducts and regular epimorphisms is $(V_\wedge$-$\Cat_\sym)^\op$.
\end{enumerate}
\end{corollary}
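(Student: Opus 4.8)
The plan is to obtain both statements as instances of \textsf{(W4)} applied to the base category $\C=(\VCat)^\op$, using Theorem~\ref{thm for V-cats}(1) to identify the subcategory $\W(\C)$ that appears there.

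First I would confirm that $\C=(\VCat)^\op$ meets the standing hypotheses of \textsf{(W4)}. It is a regular category, as shown directly in the discussion preceding Theorem~\ref{thm for V-cats}; it has binary coproducts, namely the binary products of $\VCat$, which exist since $\VCat$ is complete; and it is well-powered, since $\VCat$, being topological over $\Set$, is co-well-powered (each $V$-category has only a set of quotients). The one condition that needs an argument is that every relation in $\W(\C)$ is also a relation in $\C$. By Theorem~\ref{thm for V-cats}(1) we have $\W(\C)=(V_\wedge\mbox{-}\Cat_\sym)^\op$, so dually a relation in $\W(\C)$ is a jointly epimorphic cospan $X\stackrel{r_1}{\longrightarrow}R\stackrel{r_2}{\longleftarrow}Z$ in $V_\wedge\mbox{-}\Cat_\sym$, and we must see that it stays jointly epimorphic in $\VCat$, i.e. that $[r_1,r_2]\colon X+Z\to R$ remains epimorphic. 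This rests on two facts: the binary coproduct $X+Z$ is computed the same way in $V_\wedge\mbox{-}\Cat_\sym$ as in $\VCat$ (the $\VCat$-coproduct, disjoint union with hom-value $\bot$ across the two components, is again symmetric and $V_\wedge$), and in each of $\VCat$ and $V_\wedge\mbox{-}\Cat_\sym$ the epimorphisms are exactly the surjective $V$-functors. For $\VCat$ this is recalled in the text; for $V_\wedge\mbox{-}\Cat_\sym$ it holds because this category is again topological over $\Set$ -- the initial structure determined by any family of $V$-functors into symmetric $V_\wedge$-categories is again symmetric and $V_\wedge$ -- and a topological category over $\Set$ has its epimorphisms detected by the underlying-set functor. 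Hence $[r_1,r_2]$ is epimorphic in $V_\wedge\mbox{-}\Cat_\sym$ if and only if it is surjective if and only if it is epimorphic in $\VCat$.

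With these hypotheses verified, \textsf{(W4)} says that $\W(\C)$ is the largest full subcategory of $\C=(\VCat)^\op$ that is a Mal'tsev category and is closed under binary coproducts and quotients (= regular epimorphisms) in $\C$; combined with the identification $\W(\C)=(V_\wedge\mbox{-}\Cat_\sym)^\op$ this is exactly statement (2). For statement (1) one specialises to a cartesian quantale $V$: then $V=V_\wedge$, so symmetric $V_\wedge$-categories are precisely symmetric $V$-categories and $\W(\C)=(\VCat_\sym)^\op$; since \textsf{(W4)} asserts in particular that $\W(\C)$ is a Mal'tsev category, so is $(\VCat_\sym)^\op$.

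The step I expect to demand the most care is the verification that relations in $\W(\C)$ remain relations in $\C$, which amounts to the comparison of coproducts and of (joint) epimorphisms between $V_\wedge\mbox{-}\Cat_\sym$ and $\VCat$; the remaining points are either quoted from the preceding sections or routine checks of size conditions. As an alternative one could prove (1) from scratch by rerunning the argument behind Theorem~\ref{thm for V-cats} inside $(\VCat_\sym)^\op$ via \textsf{(W5)} and then invoking \textsf{(W1)}, but routing both parts through \textsf{(W4)} is more economical.
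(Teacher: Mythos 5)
Your proposal is correct and follows the route the paper itself intends: the corollary is stated as an immediate consequence of Theorem~\ref{thm for V-cats}(1) combined with \textsf{(W4)}, exactly as flagged in the discussion of $\W(\C)$ in Section~\ref{W-Maltsev objs vs Maltsev objs}. The only difference is that you explicitly verify the hypothesis that relations in $\W(\C)=(V_\wedge$-$\Cat_\sym)^\op$ remain relations in $(\VCat)^\op$ (via agreement of coproducts and of epimorphisms $=$ surjections in the two topological categories), a point the paper leaves implicit; your verification is sound.
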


It is straightforward to check that, if we restrict our study to symmetric $V_\otimes$-categories, in case the tensor product $\otimes$ is commutative the double split epimorphism built above also works. Hence we may also conclude that Mal'tsev objects trivialise is this more restrictive setting.

\begin{corollary}
If $V$ is a commutative unital and integral non-cartesian quantale, then the only Mal'tsev object in $(\VCat_\sym)^\op$ is the empty $V$-category.
\end{corollary}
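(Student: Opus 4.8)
The plan is to transport, essentially verbatim, the proof of part~(2) of Theorem~\ref{thm for V-cats} to the category $(\VCat_\sym)^\op$; the only point that needs care is a pushout computation, and this is exactly what commutativity of $\otimes$ takes care of.

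First I would check that $(\VCat_\sym)^\op$ is a regular category, by the very argument given for $(\VCat)^\op$. Subobjects of symmetric $V$-categories are symmetric, and --- since $V$ is commutative --- the ``join over walks'' description of colimits in $\VCat$ shows that a colimit of symmetric $V$-categories is symmetric; hence $\VCat_\sym$ is closed under colimits in $\VCat$ and inherits its pullback-stable (epimorphism, regular monomorphism)-factorisation system. So the characterisations \textsf{(M4)} (Mal'tsev objects are closed under quotients) and \textsf{(M5)} (an object is a Mal'tsev object iff every double split epimorphism over it is a regular pushout) are available in $(\VCat_\sym)^\op$.

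Next, as in Theorem~\ref{thm for V-cats}(2), I would reduce to the one-point $V$-category. By integrality of $V$ there is, up to isomorphism, a unique one-point symmetric $V$-category $Y=\{1\}$, with $Y(1,1)=k$; and for any non-empty symmetric $Z$ the unique $V$-functor $Z\to Y$ is a retraction of every $V$-functor $Y\to Z$, so $Y$ is a quotient of $Z$ in $(\VCat_\sym)^\op$. By \textsf{(M4)} it therefore suffices to show that $Y$ is not a Mal'tsev object, $\emptyset$ being trivially one.

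Finally I would reuse the double split epimorphism built in the proof of Theorem~\ref{thm for V-cats}(2): pick $u,v\in V$ with $u\otimes v<u\wedge v$ (possible because $V$ is non-cartesian) and take the symmetric $V$-categories $A=\{0,1\}$, $C=\{1,2\}$, $D=\{0,1,2\}$ with $A(0,1)=u$, $C(1,2)=v$, $D(0,2)=u\wedge v$, glued along the common point $1$, with the obvious $V$-functors. Here, and only here, I use commutativity: since $\{1\}\to A$ and $\{1\}\to C$ are regular monomorphisms, the pushout $A+_YC$ computed in $\VCat$ has $(A+_YC)(0,2)=u\otimes v=v\otimes u=(A+_YC)(2,0)$, so it is symmetric and thus agrees with the pushout computed in $\VCat_\sym$. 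The comparison $V$-functor $A+_YC\to D$ is then a bijection with $(A+_YC)(0,2)=u\otimes v<u\wedge v=D(0,2)$, hence not a regular monomorphism; dually the corresponding double split epimorphism over $Y$ in $(\VCat_\sym)^\op$ is not a regular pushout, so $Y$ is not a Mal'tsev object by \textsf{(M5)} and the corollary follows. The only mildly delicate point is this pushout bookkeeping: without commutativity the $\VCat$-pushout need not be symmetric, so one would have to symmetrise it and re-check the comparison morphism, which is precisely why the hypothesis enters.
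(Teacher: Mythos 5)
Your proposal is correct and follows essentially the same route as the paper, which justifies this corollary only by the one-line remark that ``the double split epimorphism built above also works'' when $\otimes$ is commutative. You supply the details the paper elides --- regularity of $(\VCat_\sym)^\op$, the reduction to $Y=\{1\}$ via \textsf{(M4)}, and the observation that commutativity is exactly what makes the $\VCat$-pushout $A+_YC$ symmetric (so that it coincides with the pushout in $\VCat_\sym$) --- and these are precisely the points the paper's remark is gesturing at.
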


In order to generalise Weighill's result for topological spaces we need to introduce an extra ingredient, the \emph{ultrafilter monad} on $\Set$, and its extension to $V\mbox{-}\Rel$ (see \cite{CT03} for details). As Barr showed in \cite{Barr70}, a topological space can be described via its ultrafilter convergence, meaning that a topological space can be given by a set $X$, together with a relation between ultrafilters on $X$ and points of $X$, $UX\relto X$ such that, for every $x\in X$, $\xx\in UX$, and $\xxx\in U^2X$,
\begin{enumerate}
\item $\ultra{x}\,\to x$ (where $\ultra{x}$ is the principal ultrafilter defined by $x$);
\item $\xxx\to\xx$ and $\xx\to x$ $\implies$ $\mu(\xxx)\to x$ (where $\mu(\xxx)$ is the Kowalski sum of $\xxx$).
\end{enumerate}
Given topological spaces $X$ and $Y$, a map $f\colon X\to Y$ is continuous if, for all $\xx\in UX$ and $x\in X$,  $Uf(\xx)\to f(x)$ whenever $\xx\to x$.

This approach can be generalised making use of the ultrafilter monad $U$, a unital and integral quantale $V$, and a lax extension of $U$ to $V\mbox{-}\Rel$.

\begin{definition}
A \defn{$\mathbf{(\emph{U,V})}$-category} is a set $X$ together with a map $a\colon UX\times X\to V$ whose image of $(\xx,x)$ we denote by $X(\xx,x)$, such that, for each $x\in X$, $\xx\in UX$, $\xxx\in U^2X$,
\begin{enumerate}
\item[(R)] $k\leq X(\ultra{x},x)$;
\item[(T)] $X(\xxx,\xx)\otimes X(\xx,x)\leq X(\mu(\xxx),x)$,
\end{enumerate}
where by $X(\xxx,\xx)$ we mean the image, under $U$, of the $V$-relation $a\colon UX\relto X$.\\
A $(U,V)$-functor $f\colon X\to Y$ is a map such that $X(\xx,x)\leq Y(Uf(\xx),f(x))$, for all $x\in X$, $\xx\in UX$.
\end{definition}
We denote by $(U,V)\mbox{-}\Cat$ the category of $(U,V)$-categories and $(U,V)$-functors.

\begin{examples}
\begin{enumerate}
\item As shown by Barr \cite{Barr70}, if $V=2$, then $(U,V)\mbox{-}\Cat$ is isomorphic to the category $\Top$ of topological spaces and continuous maps.
\item As shown in \cite{CH03}, if $V=[0,\infty]_+$ $(U,V)\mbox{-}\Cat$ is isomorphic to the category $\App$ of Lowen's \emph{approach spaces} and non-expansive maps.
\end{enumerate}
\end{examples}

\begin{proposition}
The category $((U,V)\mbox{-}\Cat)^\op$ is regular.
\end{proposition}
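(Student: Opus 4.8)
The plan is to verify, in dual form, the three conditions that make a category regular, namely that $(U,V)\mbox{-}\Cat$ has finite colimits, that every cokernel pair in it admits an equalizer, and that its regular monomorphisms are stable under pushout. The first two cost nothing: the forgetful functor $(U,V)\mbox{-}\Cat\to\Set$ is topological, so $(U,V)\mbox{-}\Cat$ is complete and cocomplete, hence in particular has finite colimits and (all) equalizers. The whole content therefore lies in the third condition, and the argument will run along the same lines as the direct proof of regularity of $(\VCat)^\op$ given above, the ultrafilter monad being responsible for the only genuine complication.

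First I would describe the regular monomorphisms of $(U,V)\mbox{-}\Cat$. Since the forgetful functor is topological, its monomorphisms are exactly the injective $(U,V)$-functors, and computing a cokernel pair isolates the regular (equivalently, effective) ones: for an injective $m\colon X\to Z$, the cokernel pair $Z\rightrightarrows Z+_XZ$ is, on underlying sets, two copies of $Z$ glued along $X$, and its equalizer is $X$ equipped with the structure \emph{initial} with respect to $m$, namely $X(\xx,x)=Z(Um(\xx),m(x))$ for all $\xx\in UX$, $x\in X$. Hence $m$ is a regular monomorphism precisely when it is an injective and initial $(U,V)$-functor, i.e. an ``embedding'', and it remains to prove that embeddings are stable under pushout.

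So let $m\colon X\to Z$ be an embedding, which we may assume is an inclusion carrying the initial structure, let $f\colon X\to Y$ be arbitrary, and form the pushout; computed in $\Set$ it is $W=(Y+Z)/\!\sim$ with $x\sim f(x)$ for $x\in X$, endowed with the final $(U,V)$-structure, with coprojections $n\colon Y\to W$ and $g\colon Z\to W$. On underlying sets $W=Y\sqcup(Z\setminus X)$, the map $n$ is injective, $g$ restricts to $f$ on $X$ and to the inclusion on $Z\setminus X$, and $q:=[n,g]\colon Y+Z\to W$ is surjective. As $n$ is a $(U,V)$-functor, showing that $n$ is initial reduces to the inequality $W(Un(\yy),n(y))\leq Y(\yy,y)$ for all $\yy\in UY$, $y\in Y$. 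The final $(U,V)$-structure on $W$ is the smallest $(U,V)$-category structure above the direct-image structure $w_0(\ww,w)=\bigvee\{\,(Y+Z)(\mathfrak{a},a)\ :\ Uq(\mathfrak{a})=\ww,\ q(a)=w\,\}$, obtained by closing $w_0$ under the reflexivity and transitivity axioms. Now $w_0$ already gives the correct value on the $Y$-part: the structure of $Y+Z$ vanishes across summands, so a lift of $n(y)$ is $y$ itself or some $x\in X$ with $f(x)=y$, while a lift of $Un(\yy)$, being an ultrafilter on $W$ containing $Y$, lies in $UY$ — hence equals $\yy$ — or in $UX\subseteq UZ$, mapping to $\yy$ under $Uf$ because $g^{-1}(Y)=X$; using that $m$ is initial to turn the resulting $Z$-values between points of $X$ into $X$-values, and that $f$ is a $(U,V)$-functor to bound those by $Y$-values, one obtains $w_0(Un(\yy),n(y))=Y(\yy,y)$, exactly as in the $\VCat$ computation above.

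It then remains to see that the transitive closure does not raise this value, and this is the main obstacle. By induction on the closure steps, a ``composite path'' from $Un(\yy)$ to $n(y)$ either stays over the $Y$-part of $W$, where transitivity in $Y$ keeps it below $Y(\yy,y)$, or it makes excursions into $Z\setminus X\subseteq Z$; since $g^{-1}(Y)=X$, every such excursion must enter and leave through $X$, so transitivity in $Z$ collapses it to a single $Z$-value between two points of $X$, which $f$ then carries to a $Y$-value, and transitivity in $Y$ bounds the whole composite by $Y(\yy,y)$. Making this precise is delicate: one must write out the final $(U,V)$-structure explicitly — the iterated lax-algebra closure of $w_0$ — and manage the ultrafilter bookkeeping, in particular the dichotomy that an ultrafilter on $W=Y\sqcup(Z\setminus X)$ either comes from $UY$ or lives on $Z\setminus X\subseteq Z$, together with the compatibility of the Barr extension of $q$ and of the monad multiplication $\mu$ with this decomposition. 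This is exactly the point at which the $(U,V)$-case is genuinely harder than the $V\mbox{-}\Cat$-case treated above, where no ultrafilters appear. Once it is settled, $n$ is an embedding and the proof is complete.
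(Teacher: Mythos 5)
Your argument is essentially the paper's: you identify the regular monomorphisms of $(U,V)\mbox{-}\Cat$ as the injective initial morphisms (those with $X(\xx,x)=Z(Um(\xx),m(x))$), reduce regularity of the dual to stability of these under pushout, compute the pushout as $W=(Y+Z)/\!\sim$ with coprojection $n$, and make the one observation that carries the whole proof, namely that any $\zz\in UZ$ with $Ug(\zz)=Un(\yy)$ must contain $X$, hence is $Um(\xx)$ for some $\xx\in UX$, so that initiality of $m$ and functoriality of $f$ give $Z(\zz,x)=X(\xx,x)\leq Y(Uf(\xx),f(x))=Y(\yy,y)$ and therefore $W(Un(\yy),[y])=Y(\yy,y)$. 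The only place you stop short --- whether the final structure on $W$ is the direct-image structure $w_0$ or a strictly larger iterated closure of it --- is precisely the point the paper also economizes on: it simply writes the direct-image join as \emph{the} structure of the pushout, exactly as it did for $\VCat$. You do not need the induction on closure steps that you describe as delicate; it suffices to verify once that $w_0$ already satisfies axioms (R) and (T), so that it is itself a $(U,V)$-structure making $n$ and $g$ functors and hence coincides with the final one. Reflexivity is immediate, and for (T) the same dichotomy you already use (a lift of an ultrafilter on $W$ either contains $Y$, hence comes from $UY$, or contains $Z$, and if its image contains $n(Y)$ it must contain $X$ since $g^{-1}(n(Y))=X$) converts every $Z$-value between points of $X$ into an $X$-value and then, via $f$, into a $Y$-value, after which transitivity of $Y$ and $Z$ gives the bound. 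With that single verification written out, your proof is complete and coincides with the one in the paper.
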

\begin{proof}
As in $\VCat$, in $(U,V)\mbox{-}\Cat$ (epimorphisms, regular monomorphisms) form a stable factorisation system, and a morphism $m\colon X\to Z$ is a regular monomorphism if, and only if, it is an injective map and $X(\xx,x)=Z(Um(\xx),m(x))$, for all $\xx\in UX$ and $x\in X$.
Given a pushout in $(U,V)\mbox{-}\Cat$
\[\xymatrix{X\ar[r]^m\ar[d]_f&Z\ar[d]^g\\
Y\ar[r]_n&W}\] with $m$ a regular monomorphism, which we assume, for simplicity, to be an inclusion, as for $\VCat$ we may consider $W=(Y+Z)/\sim$, where, for $y\in Y$ and $z\in Z$, $y\sim z$ exactly when $z\in X$ and $f(z)=y$. Then $n$, and therefore also $Un$, is an injective map, and, for every $\yy\in UY$ and $y\in Y$,
\[W(Un(\yy),[y])=\displaystyle Y(\yy,y)\vee\bigvee_{Ug(\zz)=Un(\yy),\,x\sim y}\,Z(\zz,x).\]
Any $\zz\in UZ$ with $Ug(\zz)=Un(\yy)$ must contain $X$, and consequently it is the image under $Um$ of an ultrafilter $\xx$ in $X$; hence $Z(\zz,x)=X(\xx,x)\leq Y(Uf(\xx),f(x))=Y(\yy,y)$. This implies that $Y(\yy,y)=W(Un(\yy),[y])$,
and therefore $n\colon Y\to W$ is a regular monomorphism.
\end{proof}

\begin{proposition}
\begin{enumerate}
\item For a $(U,V)$-category $Y$ the following conditions are equivalent:
\begin{enumerate}
\item[(i)] $Y$ is a W-Mal'tsev object in $((U,V)\mbox{-}\Cat)^\op$;
\item[(ii)] For all $\zz\in UY$, $x,y\in Y$,
\begin{enumerate}
\item[(a)] $Y(\zz,x)\wedge Y(\zz,y)\leq Y(\ultra{x},y)$;
\item[(b)] $Y(\zz,x)\wedge Y(\ultra{x},y)\leq Y(\zz,y)$.
\end{enumerate}
\end{enumerate}
\item If $V$ is not a cartesian quantale, then $Y$ is  Mal'tsev object in  $((U,V)\mbox{-}\Cat)^\op$ if, and only if, $Y=\emptyset$.
\end{enumerate}
\end{proposition}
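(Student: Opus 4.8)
The plan is to prove both statements by the method already used for Theorem~\ref{thm for V-cats}, now carrying the ultrafilter data along. For (1), since $((U,V)\mbox{-}\Cat)^\op$ is a regular category with binary coproducts (its binary coproducts being binary products in $(U,V)\mbox{-}\Cat$) and regularity was just established, \textsf{(W5)} applies, and exactly as in the discussion preceding Theorem~\ref{thm for V-cats} it says: $Y$ is a W-Mal'tsev object if and only if, for the (epimorphism, regular monomorphism)-factorisation $Y^2+Y^2\overset{e}{\to}X\overset{m}{\to}Y^3$ of the $(U,V)$-functor $f$ with $f\iota_1=\la\pi_1,\pi_2,\pi_2\ra$ and $f\iota_2=\la\pi_2,\pi_2,\pi_1\ra$, there is a $(U,V)$-functor $g\colon X\to Y$ with $ge\iota_1=\pi_1=ge\iota_2$. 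As a set $X=X_1\cup X_2$ with $X_1=\{(x,y,y)\}$ and $X_2=\{(x,x,y)\}$; it carries the subspace structure from the product $Y^3$, i.e.
\[X(\ww,(a,b,c))=Y(Up_1(\ww),a)\wedge Y(Up_2(\ww),b)\wedge Y(Up_3(\ww),c)\qquad(\ww\in UX),\]
where $p_i\colon X\to Y$ is the $i$-th coordinate; and $g$ must send $(x,y,y)\mapsto x$ and $(y,y,x)\mapsto x$ (consistently on the diagonal $X_1\cap X_2$). The only ultrafilter fact used is that maps agreeing on a member of an ultrafilter have equal direct images of it; in particular $Ug(\ww)=Up_1(\ww)$ whenever $X_1\in\ww$ and $Ug(\ww)=Up_3(\ww)$ whenever $X_2\in\ww$, and $X=X_1\cup X_2$ forces one of these.

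For (i)$\Rightarrow$(ii) I would evaluate $g$ at two well-chosen points. Fix $\zz\in UY$ and $x,y\in Y$. Taking $\ww$ to be the image of $\zz$ under $b\mapsto(x,b,b)$ gives $Up_1(\ww)=\ultra{x}$, $Up_2(\ww)=Up_3(\ww)=\zz$, $Ug(\ww)=\ultra{x}$, so $(U,V)$-functoriality of $g$ at $(x,x,y)\in X_2$ yields $Y(\zz,x)\wedge Y(\zz,y)=X(\ww,(x,x,y))\leq Y(\ultra{x},y)$, which is (a). Taking instead $\ww$ the image of $\zz$ under $a\mapsto(a,x,x)$ gives $Up_1(\ww)=\zz$, $Up_2(\ww)=Up_3(\ww)=\ultra{x}$, $Ug(\ww)=\zz$, so functoriality of $g$ at $(x,x,y)$ yields $Y(\zz,x)\wedge Y(\ultra{x},y)=X(\ww,(x,x,y))\leq Y(\zz,y)$, which is (b). For (ii)$\Rightarrow$(i) I would check that $g$ is a $(U,V)$-functor by a four-fold split, according as $X_1\in\ww$ or $X_2\in\ww$ and according as the test point lies in $X_1$ (shape $(x,y,y)$, $g=p_1$) or in $X_2$ (shape $(x,x,z)$, $g=p_3$). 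Writing $\zz_i=Up_i(\ww)$, note $\zz_2=\zz_3$ if $X_1\in\ww$ and $\zz_1=\zz_2$ if $X_2\in\ww$. Two of the four combinations are immediate, since then $Ug(\ww)$ is one of the three coordinates of the meet defining $X(\ww,-)$. In each of the other two one first uses (a) — read with its arguments $x,y$ interchanged, which is legitimate as its left-hand side is symmetric in $x,y$ — to bound a meet of two $Y(\zz_i,-)$-terms by a $Y(\ultra{\,\cdot\,},\,\cdot\,)$-term, and then uses (b) to absorb that term into the remaining $Y(\zz_j,-)$-term; e.g. for $X_2\in\ww$ and test point $(x,y,y)$, $X(\ww,(x,y,y))=Y(\zz_1,x)\wedge Y(\zz_1,y)\wedge Y(\zz_3,y)\leq Y(\ultra{y},x)\wedge Y(\zz_3,y)\leq Y(\zz_3,x)=Y(Ug(\ww),g(x,y,y))$, and the case $X_1\in\ww$, test point $(x,x,z)$ is symmetric.

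For (2) I would argue exactly as for Theorem~\ref{thm for V-cats}(2). The empty $(U,V)$-category is trivially a Mal'tsev object (the category of points over it being terminal), so only the converse has content. Every inclusion $1\to Y$ of a point into a non-empty $Y$ is a regular monomorphism in $(U,V)\mbox{-}\Cat$ — it is injective and $Y(\ultra{y_0},y_0)=k$ — hence a regular epimorphism $Y\to 1$ in $((U,V)\mbox{-}\Cat)^\op$, so by \textsf{(M4)} it suffices to show the terminal $(U,V)$-category $1$ is not a Mal'tsev object. By \textsf{(M5)} and duality (a double split epimorphism over $Y$ in $((U,V)\mbox{-}\Cat)^\op$ is one of the shape \eqref{eq:op} in $(U,V)\mbox{-}\Cat$, and being a regular pushout dualises to the comparison $A+_Y C\to D$, pushout in $(U,V)\mbox{-}\Cat$, being a regular monomorphism) it is enough to exhibit such a diagram with $Y=1$ whose comparison fails to be a regular monomorphism. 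Since $V$ is non-cartesian, choose $u,v\in V$ with $u\otimes v<u\wedge v$ and take the \emph{finite} symmetric $(U,V)$-categories $A=\{0,1\}$, $C=\{1,2\}$, $D=\{0,1,2\}$ with $A(0,1)=u$, $C(1,2)=v$, $D(0,2)=u\wedge v$, all remaining off-diagonal distances being the obvious ones (all ultrafilters are principal here, so these reduce to $V$-category structures, and the required transitivity inequalities follow from $u\otimes v\leq u\wedge v$); with $f(*)=1\in A$, $g(*)=1\in C$, $s'(0)=1$, $t'(2)=1$ and the evident inclusions, \eqref{eq:op} is a double split epimorphism whose comparison $A+_1 C\to D$ is a bijection that is not a regular monomorphism, because in the pushout $(A+_1 C)(0,2)$ is the composite $u\otimes v$, strictly below $D(0,2)=u\wedge v$.

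The main obstacle I anticipate is in part (1): recognising that, unlike in Theorem~\ref{thm for V-cats} (where $\VCat$ involves no ultrafilters), the W-Mal'tsev condition here genuinely needs the \emph{two} separate inequalities (a) and (b) — (b) being exactly what the functoriality of $g$ forces at a point of $X_2$ tested against an ultrafilter concentrated on $X_1$ — and then carrying out the four-case verification of (ii)$\Rightarrow$(i) cleanly; everything else is bookkeeping with direct images of ultrafilters, and part (2) is a routine transcription of the $\VCat$ argument once one notes the spaces involved are finite.
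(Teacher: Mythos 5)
Your proof is correct and follows the paper's own argument essentially step for step: the same reduction via \textsf{(W5)} to the $(U,V)$-functoriality of $g\colon X\to Y$, the same test ultrafilters on $X$ with marginals $(\ultra{x},\zz,\zz)$ and $(\zz,\ultra{x},\ultra{x})$ evaluated at $(x,x,y)$ to extract (a) and (b), the same (a)-then-(b) two-step estimate in the converse split by which of $X_1,X_2$ belongs to $\ww$, and the same three-point counterexample over $Y=\{1\}$ for part (2). The only divergence is cosmetic: you take $D(\ultra{0},2)=u\wedge v$ (as in the proof of Theorem~\ref{thm for V-cats}), whereas the paper's proof of this proposition writes $u\otimes v$ there; your choice is the one under which the bijective comparison $A+_YC\to D$ actually fails to be a regular monomorphism, so it is the right one.
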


\begin{proof}
(1) The map $g\colon X\to Y$ is a $(U,V)$-functor if, and only if, for every $\ww\in UX$ and $(x,y,z)\in X$,
\[
	\resizebox{\textwidth}{!}{$X(\ww,(x,y,z))=Y(U\pi_1(\ww),x)\wedge Y(U\pi_2(\ww),y)\wedge Y(U\pi_3(\ww),z)\leq Y(Ug(\ww),g(x,y,z)).$}
\]
For simplicity, we will denote the $i$-th projection $Y^k\to Y$ ($k=2,3$) by $\pi_i$ and $U\pi_i(\ww)$ by $\ww_i$.

Let $h\colon Y\times Y\to X$ with $h(x,y)=h(x,y,y)$.

(i) $\Rightarrow$ (ii): Fix $\zz\in UY$ and $x\in Y$. Since the canonical map $U(Y^2)\to U(Y)^2$ is surjective, there exists $\widetilde{\zz}\in U(Y^2)$ such that $\widetilde{z_1}=\ultra{x}$ and $\widetilde{z_2}=\zz$. Then $\ww=Uh(\widetilde{\zz})\in UX$ and, since $g\cdot h=\pi_1$, $\ww_1=\ultra{x}$, $\ww_2=\zz=\ww_3$, and $Ug(\ww)=\ultra{x}$. Then we have
\[X(\ww,(x,x,y))\leq Y(\ultra{x},y)\;\Rightarrow\; Y(\ultra{x},x)\wedge Y(\zz,x)\wedge Y(\zz,y)\leq Y(\ultra{x},y),\]
that is (a) holds.\\
Analogously, let $\widehat{\zz}\in U(Y^2)$ be such that $\widehat{z_1}=\zz$ and $\widehat{z_2}=\ultra{x}$, and let $\xx=Uh(\widehat{\zz})$, so that $Ug(\xx)=\zz$. Then, for every $y\in Y$,
\[X(\xx,(x,x,y))\leq Y(\zz,y)\;\Rightarrow\; Y(\zz,x)\wedge Y(\ultra{x},x)\wedge Y(\ultra{x},y)\leq Y(\zz,y),\]
that is (b) holds.

(ii) $\Rightarrow$ (i): Conversely, assume (a) and (b). Let $\ww\in UX$ and assume that $X_1\in\ww$ (for the case $X_2\in\ww$ the proof is analogous). Then $\ww=Uh(\yy)$ for some $\yy\in U(Y^2)$. Then
\[X(\ww,(x,y,y))=Y(\yy_1,x)\wedge Y(\yy_2,y)\wedge Y(\yy_2,y)\leq Y(\yy_1,x)=Y(Ug(\ww),x),\]
\[X(\ww,(x,x,y))=Y(\yy_1,x)\wedge Y(\yy_2,x)\wedge Y(\yy_2,y)\leq Y(\yy_1,x)\wedge Y(\ultra{x},y)\leq Y(\yy_1,y);\]
that is, $g$ is a $(U,V)$-functor.

\noindent (2) We may argue as in the proof of Theorem \ref{thm for V-cats}, and use $(U,V)$-categories $Y=\{1\}$, $A=\{0,1\}$, $C=\{1,2\}$ and $D=\{0,1,2\}$ with the structures given by $A(\ultra{0},1)=A(\ultra{1},0)=u$, $C(\ultra{1},2)=C(\ultra{2},1)=v$, and $D(\ultra{0},2)=D(\ultra{2},0)=u\otimes v$.
\end{proof}

\begin{remark}
The result of the proposition above applies immediately to $\Top$ and $\App$, and it may be extended to other suitable $\Set$-monads.
For simplicity we chose to restrict ourselves to the ultrafilter monad, recovering the result in $\Top$ and obtaining a characterisation of W-Mal'tsev objects in $(\App)^\op$.

At a first glance it may seem that here we need an extra condition to recover the result in $\Top$. Indeed, in $(\Top)^\op$ condition (a) is the well-known separation axiom (R1) we have mentioned in Remark \ref{re:R1}, while condition (b) follows from axiom (T) of the definition of $(U,V)$-category in case $V$ is cartesian, which is the case of $\Top$. When $V=[0,\infty]_+$ condition (a) is stronger than (R1), which reads as, for every $\zz\in UY$, $x,y\in Y$, $Y(\zz,x)+Y(\zz,y)\geq Y(\ultra{x},y)$, and (b) does not follow from axiom (T).
\end{remark}


\end{document}